\newtheorem{theorem}{Theorem}[section]
\newtheorem{lemma}[theorem]{Lemma}
\theoremstyle{definition}
\newtheorem{definition}[theorem]{Definition}
\newtheorem{xca}[theorem]{Exercise}
\newtheorem{proposition}[theorem]{Proposition}
\theoremstyle{remark}
\newtheorem{remark}[theorem]{Remark}
\numberwithin{equation}{section}
\begin{document}

\title[Locally Bronze Semi-Riemannian manifold with an $(l,m)$-type Connection]{Screen Generic Lightlike Submanifolds of a Locally Bronze Semi-Riemannian Manifold equipped with an $(l,m)$-type Connection}

%    Information for first author
\author{Rajinder Kaur}
%    Address of record for the research reported here
\address{Department of Mathematics, Punjabi University,
	Patiala, Punjab, India}
%    Current address
\curraddr{Department of Mathematics, Punjabi University,
	Patiala, Punjab, India}
\email{rajinderjasar@gmail.com}
%    Information for second author
\author{Jasleen Kaur}
\address{Department of Mathematics, Punjabi University,
	Patiala, Punjab, India}
\email{jasleen2381@gmail.com}

%    General info
\subjclass[2000]{Primary: 53C15; Secondary:53C40, 53C50}

\keywords{Locally bronze semi-Riemannian manifold, screen generic lightlike submanifold, totally umbilical lightlike submanifold}

\begin{abstract}
The present paper introduces the geometry of screen generic lightlike submanifolds  of a locally bronze semi-Riemannian manifolds endowed with an (l,m)-type connection. The characterization theorems on geodesicity of such submanifolds with respect to the integrability and parallelism of the distributions are provided. It is proved that there exists no coisotropic , isotropic or totally  proper screen generic lightlike submanifold of a locally bronze semi-Riemannian manifold. Assertions for the smooth transversal vector fields in totally umbilical proper screen generic lightlike submanifold are obtained. The structure of a minimal screen generic lightlike submanifold of a locally bronze semi-Riemannian manifold is detailed with an example.
\end{abstract}

\maketitle

\section{Introduction}
De Spinadel \cite{V1} developed the theory of metallic means family, in which bronze ratio has an important location in studying topics such as dynamical system and quasicrystals. The geometry of bronze structures has an important relation with pure Riemannian metrics according to the corresponding structure. Pandey \cite{B2} introduced the concept of bronze structure on Riemannian manifold which was further studied by Aknipar \cite{B3}. Moreover, Duggal and Bejancu \cite{B1} initiated the theory of lightlike submanifolds of semi-Riemannian manifolds which provides an outstanding framework for geometric characteristics and has versatile applications, particularly in general relativity.\\ 
In view of this, Jin \cite{G6} introduced the idea of generic lightlike submanifolds for an indefinite cosymplectic manifold  which was further explored by \cite{G5,G6} for indefinite Sasakian manifold and indefinite Kaehler manifold, respectively. Several new connections such that semi-symmetric non-metric connection \cite{G2}, non-metric $\phi$- symmetric connection \cite{G4} etc. applied to the screen generic lightlike submanifolds came into existence. As the generalization of screen Cauchy Riemann(SCR) and generic lightlike submanifolds, Dogan et al. \cite{SG1} introduced the notion of screen generic lightlike submanifold for indefinite Kaehler manifold. Then, a few more similar classes namely, contact totally umbilical screen generic and minimal screen generic lightlike submanifolds of indefinite Sasakian manifolds were researched by Gupta \cite{SG5}. Recently, the theory of screen generic lightlike submanifolds was investigated for golden semi-Riemannian manifold \cite{SG2}, cosympletic manifold \cite{SG4} and semi-Riemannian product manifold \cite{SG3}.\\    
Jin \cite{E4} introduced the notion of non-symmetric and non-metric $(l,m)$-type connection on semi-Riemannian manifolds as follows:\\
A linear connection $\bar{\Omega}$ on semi-Riemannian manifold $(\bar{M},\bar{g})$ is called an $(l,m)$-type connection if its torsion tensor $\bar{T}$ satisfy
\begin{equation*}
	\bar{T}(X,Y)=l\{\theta(Y)X-\theta(X)Y\}+m\{\theta(Y)\bar{J}X-\theta(X)\bar{J}Y\}
\end{equation*}
where $l$ and $m$ are smooth functions, $\bar{J}$ is a tensor field of $(1,1)$-type and $\theta$ is a 1-form associated with a smooth unit spacelike vector field $\eta$, which is called the characteristic vector field, by $\theta(X)=\bar{g}(X,\eta)$. We set\;$(l,m)\neq(0,0)$ and denote by $X,Y$ and $Z$, the smooth vector fields on $\bar{M}$. Also, Jin et al. \cite{G3} have developed the geometry of generic lightlike submanifold for indefinite an Kaehler manifold equipped with an $(l,m)$-type connection. However, the concept of $(l,m)$-type connection for a locally bronze semi-Riemannian manifold having screen generic lightlike submanifolds is yet to be investigated.\\
In this paper, we introduce the bronze structure for semi-Riemannian manifold and analyze the geometry of screen generic lightlike submanifolds of a locally bronze semi-Riemannian manifolds equipped with an $(l,m)$-type connection. The integrability, parallelism and geodesicity of the distributions have been characterized. We prove that there exists no coisotropic , isotropic or totally  proper screen generic lightlike submanifold $M$ of a locally bronze semi-Riemannian manifold. Totally umbilical proper screen generic lightlike submanifolds for locally bronze semi-Riemannian manifolds are also worked upon. Furthermore, the structure of a minimal screen generic lightlike submanifolds has been substantiated with an example.
\section{Preliminaries}
Consider ($ \bar{M},\bar g$) as an $(m+n)$-dimensional semi-Riemannian manifold with semi-Riemannian metric $\bar{g}$ of constant index $q$ such that $m,n\geq 1$, $1\leq q\leq m+n-1$.\\
Let $(M,g)$ be a $m$-dimensional lightlike submanifold of $\bar{M}$. In this case, there exists a smooth distribution $RadTM$ on $M$ of rank $r>0$, known as radical distribution on $M$ such that $Rad TM_p = TM_p \cap TM_p^{\perp}, \forall ~p \in M$ where $TM_p$ and  $TM_p^{\perp}$ are degenerate orthogonal spaces but not complementary. Then $M$ is called an $r$-lightlike submanifold of $\bar{M}$. \\
Now, consider $S(TM)$, known as screen distribution, as  a complementary distribution of radical distribution in  $TM$  i.e.,
\[TM = Rad TM  \perp S(TM),\]
and  $S(TM^{\perp})$, called screen transversal vector bundle, as a complementary vector subbundle to $Rad(TM)$ in $TM^{\perp}$ i.e.,
\[
TM^{\perp} = RadTM  \perp S(TM^{\perp}),
\]
As $S(TM)$ is non degenerate vector subbundle of $T\bar{M}{\mid}_M$, we have
\[
T\bar{M}{\mid}_M = S(TM) \perp S(TM)^{\perp},
\]
where $S(TM)^{\perp}$  is the complementary orthogonal vector subbundle of $S(TM)$ in $T\bar{M}{\mid}_M$.\\
Let $tr(TM)$ and $ltr(TM)$ be complementary vector bundles to $TM$ in 
$T\bar{M}{\mid}_M$ and to $RadTM$ in $S(TM^{\perp})^{\perp}$ . Then we have
\[
tr(TM) = ltr(TM) \perp S(TM^{\perp}),
\]
\[
T\bar{M}{\mid}_M = TM \oplus tr(TM), 
\]
\[ = (RadTM \oplus ltr(TM)) \perp S(TM) \perp S(TM^{\perp}).
\]
There exist four cases for a lightlike submanifold $(M,g, S(TM),S(TM^{\perp}))$:\\
(1) $M$ is $r$-lightlike if $r < min(m,n)$,\\
(2) $M$ is co-isotropic if $r=n < m$, i.e. $S(TM^{\perp})=\{0\}$,\\
(3) $M$ is isotropic if $r=m < n $, i.e. $S(TM)=\{0\}$ and\\
(4) $M$ is totally lightlike if $r= n = m$, i.e. $S(TM^{\perp})=\{0\}= S(TM)$.
\begin{theorem}
	\cite{B1} Let $(M,g,S(TM), S(TM^{\perp}))$ be an $r$-lightlike submanifold of a semi-Riemannian manifold $(\bar{M},\bar{g})$. Then there exists a complementary vector bundle $ltr(TM)$ called a lightlike transversal bundle of $RadTM$ in $S(TM^{\perp})^{\perp}$ and basis of $\Gamma(ltr(TM){\mid}_U)$ consisting of smooth sections $\{N_1,\cdots,N_r\}$ $S(TM^{\perp})^{\perp}{\mid}_U$, where $U$ is a coordinate neighborhood of $M$ such that 
	\[
	\bar{g}(N_i,\xi_j) = \delta_{ij} , \quad \bar{g}(N_i,N_j) = 0, \quad i,j=0,1,\cdots , r
	\]
	where  $\{{\xi_1, \cdots , \xi_r}\}$ is a lightlike basis of $\Gamma(RadTM){\mid}_U$.
\end{theorem}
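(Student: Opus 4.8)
The plan is to realize $ltr(TM)$ as a null complement to $RadTM$ inside a conveniently chosen non-degenerate auxiliary bundle, and then to produce the frame $\{N_1,\dots,N_r\}$ by a fibrewise linear-algebra normalization that can be carried out smoothly over the coordinate neighborhood $U$.

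First I would pin down the ambient bundle in which the $N_i$ must live. Since $S(TM)\perp S(TM^{\perp})$, the screen distribution sits inside $S(TM^{\perp})^{\perp}$, so I may split
\[
S(TM^{\perp})^{\perp} = S(TM)\perp H, \qquad H := S(TM^{\perp})^{\perp}\cap S(TM)^{\perp}.
\]
Being the orthogonal complement of the non-degenerate bundle $S(TM)$ inside the non-degenerate bundle $S(TM^{\perp})^{\perp}$, the bundle $H$ is itself non-degenerate, and a dimension count gives $\operatorname{rank} H = (m+n)-(n-r)-(m-r)=2r$. Moreover $RadTM\subset H$, since it is orthogonal to $S(TM)$ and to $S(TM^{\perp})$ by the very decompositions $TM=RadTM\perp S(TM)$ and $TM^{\perp}=RadTM\perp S(TM^{\perp})$. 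Because $RadTM$ is a totally null subbundle of rank $r$ inside the non-degenerate rank-$2r$ bundle $H$, one checks $RadTM^{\perp}\cap H = RadTM$, so $RadTM$ is maximal isotropic and the induced metric on $H$ has neutral signature $(r,r)$.

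Next I would build the dual frame. Fix the lightlike frame $\{\xi_1,\dots,\xi_r\}$ of $RadTM|_U$ and choose any smooth complementary subbundle $W$ to $RadTM$ in $H|_U$ with local frame $\{Z_1,\dots,Z_r\}$. Since $RadTM^{\perp}\cap H=RadTM$, the Gram matrix $\big(\bar g(Z_i,\xi_j)\big)$ is invertible over $U$; replacing the $Z_i$ by the corresponding smooth combinations given by its inverse, I may arrange $\bar g(Z_i,\xi_j)=\delta_{ij}$, these operations being smooth because they amount to inverting a smooth invertible matrix. To enforce nullity I set $N_i:=Z_i+\sum_{k}c_{ik}\xi_k$; since the $\xi_k$ are mutually orthogonal null sections, this modification preserves $\bar g(N_i,\xi_j)=\delta_{ij}$, while
\[
\bar g(N_i,N_j)=\bar g(Z_i,Z_j)+c_{ij}+c_{ji}.
\]
The symmetric choice $c_{ij}=-\tfrac12\,\bar g(Z_i,Z_j)$ makes the right-hand side vanish, so $\bar g(N_i,N_j)=0$ and $\bar g(N_i,\xi_j)=\delta_{ij}$ hold simultaneously.

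Finally I would declare $ltr(TM):=\operatorname{span}\{N_1,\dots,N_r\}$. By the normalization it is a rank-$r$ subbundle of $S(TM^{\perp})^{\perp}$ meeting $RadTM$ trivially and complementary to it in $H$, hence complementary to $RadTM$ in $S(TM^{\perp})^{\perp}$, as required; the transition data between neighborhoods inherit smoothness from the frame construction. The main obstacle is the linear-algebra core: verifying that $RadTM$ is maximal isotropic in $H$ so that its pairing with any complement is non-degenerate (which is what legitimizes the normalization $\bar g(N_i,\xi_j)=\delta_{ij}$), and confirming that the nullity correction does not disturb that normalization. Everything else is bookkeeping with orthogonal complements together with a routine check that the pointwise constructions glue to smooth bundle data over $U$.
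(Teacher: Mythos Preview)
The paper does not supply its own proof of this theorem: it is quoted from Duggal--Bejancu \cite{B1} and stated without argument. Your construction is correct and is essentially the standard one found there: isolate the non-degenerate rank-$2r$ bundle $H=S(TM^{\perp})^{\perp}\cap S(TM)^{\perp}$, observe that $RadTM$ is maximal isotropic in $H$, pick any complement, normalize the pairing with the $\xi_j$ to the identity, and then shift along $RadTM$ by $c_{ij}=-\tfrac12\,\bar g(Z_i,Z_j)$ to kill the mutual inner products. One small wording issue: in your final paragraph you say $ltr(TM)$ is ``complementary to $RadTM$ in $S(TM^{\perp})^{\perp}$'', but strictly speaking it is complementary to $RadTM$ in $H$ (rank $2r$), not in all of $S(TM^{\perp})^{\perp}$ (rank $m+r$); the paper's own statement carries the same looseness, so this is not a defect of your argument.
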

Let $\bar{\nabla}$ be the Levi-Civita connection on $\bar{M}$. Then, the corresponding  Gauss and Weingarten formulae are follows as:
\begin{equation}\label{eq21}
	\bar\nabla_X Y = \nabla_X Y +h(X,Y), \quad \forall ~ X, Y\in \Gamma(TM),
\end{equation}
\begin{equation}
	\bar\nabla_X N = -A_N X +  \nabla_X^{t}N, \quad \forall ~ X \in \Gamma(TM),N \in \Gamma(tr(TM)),
\end{equation}
where $\{\nabla_X Y,-A_N X\}$ and $\{h(X,Y),\nabla_X^{t}N\}$ belong to $\Gamma(TM)$ and $\Gamma(tr(TM))$, respectively. $\nabla$ and $\nabla^{t}$ are linear connections on $M$ and on the vector bundle $tr(TM)$, respectively.\\
Considering the projection morphisms $L$ and $S$ of $tr(TM)$ on $ltr(TM)$ and $S(TM^{\perp})$ respectively, we have
\begin{equation}\label{1}
	\bar\nabla_X Y =  \nabla_X Y +h^{l}(X,Y)+h^{s}(X,Y),
\end{equation}
\begin{equation}\label{2}
	\bar\nabla_X N = -A_N X +  \nabla_X^{l}N+D^{s}(X,N),
\end{equation}
\begin{equation}\label{3}
	\bar\nabla_X W = -A_W X + \nabla_X^{s}W+D^{l}(X,W),
\end{equation}
where\;$h^{l}(X,Y)=Lh(X,Y) ,h^{s}(X,Y)=Sh(X,Y), \{\nabla_X Y,A_N X,A_W X \}\in\Gamma(TM)$, $\{\nabla_X^{l}N,D^{l}(X,W)\}\in\Gamma(ltr(TM))$ and $\{\nabla_X^{s}W,D^{s}(X,N)\}\in\Gamma(S(TM^{\perp}))$. Then, considering $(\ref{1})-(\ref{3})$ and the fact that $\bar{\nabla}$ is a metric connection,  the following holds: 
\begin{equation}\label{4}
	\bar{g}(h^{s}(X,Y),W)+\bar{g}(Y,D^{l}(X,W))=\bar{g}(A_W X,Y),
\end{equation}
\begin{equation}\label{5}
	\bar{g}(D^{s}(X,N),W)=\bar{g}(A_W X,N).
\end{equation}
Let $J$ be a projection of $TM$ on $S(TM)$. Then, we have
\begin{equation}\label{6}
	\nabla_X JY= \nabla_X^{*}JY + h^{*}(X,JY),
\end{equation}
\begin{equation}\label{7}
	\nabla_X \xi=-A_\xi^{*}X+\nabla_X^{*t}\xi,
\end{equation}
for any $X,Y\in\Gamma(TM)$ and $\xi\in\Gamma(Rad(TM))$, where$\{\nabla_X^{*}JY,A_\xi^{*}X\} $ and $\{(h^{*}(X,JY),\nabla_X^{*t}\xi\}$ belong to $\Gamma(S(TM))$ and $\Gamma(RadTM)$, respectively.\\
By using the above equations, we obtain 
\begin{equation}\label{8}
	\bar{g}(h^{l}(X,JY),\xi)=g(A_\xi^{*}X,JY),
\end{equation}
\begin{equation}\label{9}
	\bar{g}(h^{*}(X,JY),N)=g(A_N X,JY),
\end{equation}
\begin{equation}\label{10}
	\bar{g}(h^{l}(X,\xi),\xi)=0, A_\xi^{*}\xi=0.
\end{equation}
Generally, $\nabla$ on $M$ is not metric connection. Since $\bar{\nabla}$ is a metric connection, from $(\ref{1})$, we derive
\begin{equation}\label{11}
	(\nabla_X g)(Y,Z)=\bar{g}(h^{l}(X,Y),Z)+\bar{g}(h^{l}(X,Z),Y),
\end{equation}\\
for any $X,Y,Z\in\Gamma(TM)$. Here, $\nabla^{*}$ is a metric connection on $S(TM)$.
\begin{definition}
	A polynomial structure on a semi-Riemannian manifold $\bar{M}$ is known as bronze structure if it is determined by $\bar{J}$ such that
	\begin{equation}\label{a}
		\bar{J}^{2}=3\bar{J}+I,
	\end{equation}
	If a semi-Riemannian metric $\bar{g}$ satisfies the equation
	\begin{equation}\label{b}
		\bar{g}(X,\bar{J}Y)=\bar{g}(\bar{J}X,Y),
	\end{equation}
	which yields
	\begin{equation}\label{c}
		\bar{g}(\bar{J}X,\bar{J}Y)=3\bar{g}(X,\bar{J}Y)+\bar{g}(X,Y),\quad \forall \quad X,Y\in\Gamma(T\bar{M})
	\end{equation}
	then $\bar{g}$ is called $\bar{J}$-compatible.
\end{definition}
\begin{definition}
	If semi-Riemannian metric $\bar{g}$ is compatible with the bronze structure  $\bar{J}$, then the pair $(\bar{g},\bar{J})$ is called a bronze semi-Riemannian structure and $(\bar{M},\bar{g},\bar{J})$ is {bronze semi-Riemannian manifold}.
\end{definition}
\begin{definition}
	If $(\bar{M},\bar{g},\bar{J})$ is a bronze semi-Riemannian manifold and 
	\begin{equation}\label{b1}
		\bar \nabla \bar{J}=0,
	\end{equation} 
	then  we say that $(\bar{M},\bar{g},\bar{J})$ is a locally bronze semi- Riemannian manifold.
\end{definition}
Let $(\bar{M},\bar{g},\bar{J})$ be a locally bronze semi-Riemannian manifold, where $\bar{g}$ is a semi-Riemannian metric and $\bar{J}$ is a bronze structure. Then,\\
For each $X$ tangent to $M$, $\bar{J}X$ can be written as follows
\begin{equation}\label{s1}
	\bar{J}X=fX + wX =fX + w_l X + w_s X,
\end{equation}
where $fX$ and $wX$ are the tangential and the transversal parts of $\bar{J}X$, $w_l$ and $w_s$ are the projections on $ltr(TM)$ and $S(TM^{\perp})$, respectively. In addition, for any $V\in\Gamma(tr(TM))$, $\bar{J}V$ can be written as
\begin{equation}\label{s2}
	\bar{J}V = BV + CV, 
\end{equation} 
where $BV$ and $CV$ are the tangential and the transversal parts of $\bar{J}V$, respectively.\\

\section{Screen generic lightlike submanifolds}
In this section, we introduce  screen generic lightlike submanifold for a locally bronze semi-Riemannian manifold.
\begin{definition}
	A real r-lightlike submanifold $M$ of locally bronze semi-Riemannian manifold $\bar{M}$ is said to be a screen generic lightlike submanifold of $\bar{M}$ if the following conditions are satisfied:\\
	(a) $Rad(TM)$ is invariant with respect to $\bar{J}$, that is 
	\begin{equation}\label{g1}
		\bar{J} (RadTM)= RadTM,
	\end{equation}
	(b) There exist a subbundle $B_o$ of $S(TM)$ such that 
	\begin{equation}\label{g2}
		B_o = \bar{J}(S(TM))\cap S(TM),
	\end{equation}
	where $B_o$ is a non-degenerate distribution on $M$.\\
	The above definition implies that there exist a complementary non-degenerate distribution $B^{'}$ in $S(TM)$ such that 
	\begin{equation}\label{g3}
		S(TM)=B_o \oplus B^{'} , \quad \bar{J}(B^{'})\not\subseteq S(TM),\quad \bar{J}(B^{'})\not\subseteq S(TM^{\perp}).
	\end{equation}
	Let $J_o$, $J_1$ and $Q$ be the projection morphisms on $B_o$, $RadTM$ and $B^{'}$ respectively.\\
	Then for all $X\in\Gamma(TM)$,
	\begin{equation}\label{g4}
		X=J_o X + J_1 X+QX = J^{'}X + QX,
	\end{equation}
	where $ B = B_o \perp RadTM$, $B$ is invariant and $ J^{'}X\in\Gamma(B)$, $QX\in\Gamma(B^{'})$.\\
	It is clear that  $\bar{J}(B^{'})\neq B^{'}$.\\
	Now for a vector field $Y\in\Gamma(B^{'})$  and using $(\ref{s1})$, we have 
	\begin{equation}\label{g6}
		\bar{J}Y = fY + wY,
	\end{equation}
	where $fY\in\Gamma(B^{'})$ and $wY\in\Gamma(S(TM^{\perp}))$.\\
	
	$M$ is said to be a proper screen generic lightlike submanifold of a locally bronze semi-Riemannian manifold if $B_o\neq \{0\}$ and $ B^{'}\neq\{0\}$. 
\end{definition}
\begin{proposition}
	Let $M$ be an screen generic lightlike submanifold of a locally bronze semi-Riemannian manifold $(\bar{M},\bar{g},\bar{J})$. Then, the distribution $ltr(TM)$ is invariant with respect to $\bar{J}$.
\end{proposition}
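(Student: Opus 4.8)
The plan is to fix a local section $N\in\Gamma(ltr(TM))$ and to show that $\bar{J}N$ is orthogonal to each of $S(TM)$, $S(TM^{\perp})$ and $ltr(TM)$. Since $\bar{g}(N_i,N_j)=0$ while $\bar{g}(N_i,\xi_j)=\delta_{ij}$, the orthogonal complement of $ltr(TM)\perp S(TM)\perp S(TM^{\perp})$ inside $T\bar{M}{\mid}_M$ is precisely $ltr(TM)$, so these three orthogonality relations will force $\bar{J}N\in\Gamma(ltr(TM))$, i.e. $\bar{J}(ltr(TM))\subseteq ltr(TM)$. The bronze relation $(\ref{a})$ gives $\bar{J}(\bar{J}-3I)=I$, so $\bar{J}$ is a bundle isomorphism, and this upgrades the inclusion to the asserted equality. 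Throughout I would move $\bar{J}$ across $\bar{g}$ using the self-adjointness $(\ref{b})$, writing $\bar{g}(\bar{J}N,V)=\bar{g}(N,\bar{J}V)$, and recall that $N$ annihilates everything except $RadTM$.

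The orthogonality to $S(TM)$ is the clean part, to be established first. The key preliminary observation is that $B_o$ is itself $\bar{J}$-invariant: if $X\in\Gamma(B_o)$ then $(\ref{g2})$ gives $X=\bar{J}Z$ with $Z\in\Gamma(S(TM))$, so $(\ref{a})$ yields $\bar{J}X=\bar{J}^{2}Z=3\bar{J}Z+Z=3X+Z\in\Gamma(S(TM))$; since also $\bar{J}X\in\Gamma(B)$ by the invariance of $B=B_o\perp RadTM$, and $B\cap S(TM)=B_o$, we conclude $\bar{J}X\in\Gamma(B_o)$. Combining this with the splitting $\bar{J}Y=fY+wY$ of $(\ref{g6})$ (where $fY\in\Gamma(B^{'})$ and $wY\in\Gamma(S(TM^{\perp}))$) for $Y\in\Gamma(B^{'})$, and using $S(TM)=B_o\oplus B^{'}$, I obtain $\bar{J}(S(TM))\subseteq S(TM)\perp S(TM^{\perp})$; in particular $\bar{J}X$ carries no $RadTM$- and no $ltr(TM)$-component. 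Hence for $X\in\Gamma(S(TM))$, $\bar{g}(\bar{J}N,X)=\bar{g}(N,\bar{J}X)=0$, which kills the $S(TM)$-component of $\bar{J}N$.

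It then remains to eliminate the $RadTM$- and the $S(TM^{\perp})$-components of $\bar{J}N$, and this is where the main difficulty lies. Both reduce, via $(\ref{b})$, to a statement about a vector field carrying no $RadTM$-component: the $S(TM^{\perp})$-part of $\bar{J}N$ vanishes iff $\bar{J}(S(TM^{\perp}))$ has trivial $RadTM$-part, while $\bar{g}(\bar{J}N,N^{'})=\bar{g}(N,\bar{J}N^{'})$ shows the $RadTM$-component of $\bar{J}N$ is governed by the $RadTM$-part of $\bar{J}N^{'}$ for $N^{'}\in\Gamma(ltr(TM))$. These conditions are self-referential, so self-adjointness alone cannot close the argument; I would break the circularity by feeding the decomposition $\bar{J}N=\alpha+\beta+\gamma$ (with $\alpha\in\Gamma(RadTM)$, $\beta\in\Gamma(ltr(TM))$, $\gamma\in\Gamma(S(TM^{\perp}))$, the $S(TM)$-part already being zero) back into the bronze identity $\bar{J}^{2}N=3\bar{J}N+N$ and using $\bar{J}(RadTM)=RadTM$ from $(\ref{g1})$. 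Matching the $RadTM$- and $S(TM^{\perp})$-components produces a coupled system of linear relations for $\alpha$ and $\gamma$ (schematically $\bar{J}\alpha+\alpha^{*}+\cdots=3\alpha$), whose only solution is the trivial one because every eigenvalue $\lambda$ of $\bar{J}$ on $RadTM$ satisfies $\lambda^{2}=3\lambda+1$, hence $\lambda\in\{\tfrac{3\pm\sqrt{13}}{2}\}$ and in particular $2\lambda\neq3$.

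The hard part, therefore, is precisely this last step: unlike the almost-complex setting, where self-adjointness of the structure already suffices, here one must genuinely exploit the quadratic bronze identity $(\ref{a})$, together with the invariance $(\ref{g1})$ of the radical, to rule out the radical and screen-transversal parts of $\bar{J}N$. Once $\alpha=0$ and $\gamma=0$ are confirmed, $\bar{J}N=\beta\in\Gamma(ltr(TM))$, and the invertibility of $\bar{J}$ then yields $\bar{J}(ltr(TM))=ltr(TM)$, completing the argument.
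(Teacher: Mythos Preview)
The paper states this proposition without proof, so there is nothing in the text to compare against; I can only assess your argument on its own merits.

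Your orthogonality-to-$S(TM)$ step is essentially correct, though you invoke the invariance of $B$ while still establishing the invariance of $B_o$; the direct computation $\bar{J}X=3X+Z\in S(TM)\cap\bar{J}(S(TM))=B_o$ already does the job. You also overlook a clean route to the $S(TM^{\perp})$-orthogonality: since $\bar{J}(RadTM)=RadTM$ and $\bar{J}$ is $\bar{g}$-self-adjoint, the orthogonal complement $RadTM^{\perp}=RadTM\perp S(TM)\perp S(TM^{\perp})$ is automatically $\bar{J}$-invariant, so for $W\in\Gamma(S(TM^{\perp}))$ one has $\bar{J}W\in RadTM^{\perp}$ and hence $\bar{g}(\bar{J}N,W)=\bar{g}(N,\bar{J}W)=0$ without any coupled system.

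The genuine gap is in your final step. Write $\bar{J}\xi_i=\sum_j A_{ij}\xi_j$ and let $\sum_j C_{kj}\xi_j$ be the $RadTM$-part of $\bar{J}N_k$. Self-adjointness forces $C=C^{T}$, and feeding $\bar{J}N_k$ back into $\bar{J}^{2}=3\bar{J}+I$ yields $CA+A^{T}C=3C$. In an eigenbasis of $A$ (which is diagonalisable, its minimal polynomial dividing $\lambda^{2}-3\lambda-1$) this reads $(\lambda_i+\lambda_j-3)\,C_{ij}=0$. Your appeal to ``$2\lambda\neq 3$'' handles only the entries with $\lambda_i=\lambda_j$; when $\lambda_i$ and $\lambda_j$ are the two \emph{distinct} bronze roots $(3\pm\sqrt{13})/2$, one has $\lambda_i+\lambda_j=3$ exactly, and the constraint is vacuous. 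Thus the bronze identity together with self-adjointness does \emph{not} force $C=0$ when $\bar{J}|_{RadTM}$ carries both eigenvalues, and your argument breaks down there. To rescue the conclusion one must either add a hypothesis on the eigenvalue type of $RadTM$, or---more in keeping with how $ltr(TM)$ is only defined up to choice---replace the null transversal bundle by $N_k'=N_k+\sum_j D_{kj}\xi_j$ with $D$ antisymmetric solving $A^{T}D-DA=C$; this is always solvable precisely because the obstructing entries of $C$ sit where $\lambda_i\neq\lambda_j$.
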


\section{$(l,m)$-type connection}

For Levi-Civita connection $\bar{\nabla}$ on the locally bronze semi-Riemannian manifold$(\bar{M},\bar{g},\bar{J})$, we set
\begin{equation}\label{e1}
	\bar{\Omega}_X Y =\bar{\nabla}_X Y+\theta(Y)\{lX+m\bar{J}X\},
\end{equation}
for any $X,Y\in\Gamma(TM)$. Since $\bar{\nabla}$ is torsion free and metric connection, therefore we obtain 
\begin{equation}\label{e2}
	(\bar{\Omega}_X \bar{g})(Y,Z)=-l\{\theta(Y)\bar{g}(X,Z)+\theta(Z)\bar{g}(X,Y)\}-m\{\theta(Y)\bar{g}(\bar{J}X,Z)+\theta(Z)\bar{g}(\bar{J}X,Y)\},
\end{equation} 
\begin{equation}\label{e3}
	\bar{T}(X,Y)=l\{\theta(Y)X-\theta(X)Y\}+m\{\theta(Y)\bar{J}X-\theta(X)\bar{J}Y\},
\end{equation} 
for any $X,Y,Z\in\Gamma(TM)$, where $\bar{T}^{\bar{\Omega}}$ is a torsion tensor of the connection 
$\bar{\Omega}$, where $l$ and $m$ are smooth functions and $\theta$ is a 1-form associated with a smooth unit spacelike vector field $\eta$, which is called the characteristic vector field, by $\theta(X)=\bar{g}(X,\eta)$. Setting $(l,m)\neq(0,0)$, $\bar{\Omega}$ is called an $(l,m)$-type connection. Since $\bar{M}$ admits a tensor field $\bar{J}$ of type $(1,1)$, therefore for any $X,Y\in\Gamma(TM)$, we have
\begin{equation}\label{e4}
	(\bar{\Omega}_X \bar{J})(Y)=l\{\theta(\bar{J}Y)X-\theta(Y)\bar{J}X\}+m\{\theta(\bar{J}Y)\bar{J}X-3\theta(Y)\bar{J}X-\theta(Y)X\},
\end{equation}
\begin{equation}\label{e5}
	\bar{\Omega}_X \bar{J}Y=\bar{J}(\bar{\Omega}_X Y)+l\{\theta(\bar{J}Y)X-\theta(Y)\bar{J}X\}+m\{\theta(\bar{J}Y)\bar{J}X-3\theta(Y)\bar{J}X-\theta(Y)X\},
\end{equation}

Let $(M,g,S(TM),S(TM^{\perp}))$ be a screen generic lightlike submanifold of a locally bronze semi-Riemannian manifold $(\bar{M},\bar{g})$ with an $(l,m)$ type connection $\bar{\Omega}$. Let $\Omega$ be the induced linear connection on $M$ from $\bar{\Omega}$. Therefore the
Gauss formula is as follows:
\begin{equation}\label{e6}
	\bar{\Omega}_X Y=\Omega_X Y+ \bar{h^{l}}(X,Y)+\bar{h^{s}}(X,Y),
\end{equation}
for any $X,Y\in\Gamma(TM)$, where $\Omega_X Y\in\Gamma(TM)$ , and $\bar{h^{l}}$, $\bar{h^{s}}$
are lightlike second fundamental form and the screen
second fundamental form of $M$, respectively. Now from $(\ref{1})$, $(\ref{s1})$, $(\ref{e1})$ and $(\ref{e6})$, we obtain
\begin{equation}\label{e9}
	\Omega_X Y= {\nabla}_X Y + l\theta(Y)X + m\theta(Y) fX,
\end{equation}
\begin{equation}\label{e10}
	\bar{h^{l}}(X,Y)= h^{l}(X,Y) + m\theta(Y)w_l X,
\end{equation}
\begin{equation}\label{e11}
	\bar{h^{s}}(X,Y)= h^{s}(X,Y) + m\theta(Y)w_s X,
\end{equation}
Moreover using, $(\ref{s1})$, $(\ref{g6})$, $(\ref{e2})$ and $(\ref{e6})$, we get,\\
\begin{equation}\label{e18}
	\begin{split}
		(\Omega_X g)(Y,Z) = g(h^{l}(X,Y),Z) + g(Y,h^{l}(X,Z))- l(\theta(Y)g(X,Z)+\theta(Z)g(Y,X))-\\
		m(\theta(Y)g(fX,Z) + \theta(Z)g(Y,fZ)),
	\end{split}
\end{equation}
\begin{equation}\label{e19}
	T^{\Omega}(X,Y) = l\{\theta(Y)X-\theta(X)Y\}+m\{\theta(Y)fX-\theta(X)fY\},
\end{equation}
for any $X,Y,Z \in\Gamma(TM)$, where $T^{\Omega}$ is torsion tensor of the induced connection $\Omega$ on $M$. Hence, the following result holds:
\begin{theorem}
	Let $M$ be a screen generic lightlike submanifold of a locally bronze semi-Riemannian manifold $\bar{M}$ with an $(l,m)$-type connection $\bar{\Omega}$. Then the induced connection $\Omega$ on the lightlike submanifold $M$ is also an $(l,m)$-type connection.
\end{theorem}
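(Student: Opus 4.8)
The plan is to compute the torsion tensor $T^{\Omega}$ of the induced connection $\Omega$ directly and to show that it has exactly the form demanded by the definition of an $(l,m)$-type connection, with the tensor field $\bar{J}$ replaced by its tangential projection $f$ from $(\ref{s1})$. In other words, the whole content of the theorem is the recognition that $(\ref{e19})$ is an instance of the defining torsion identity $(\ref{e3})$.

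First I would recall the explicit expression $(\ref{e9})$ for the induced connection, $\Omega_X Y = \nabla_X Y + l\theta(Y)X + m\theta(Y)fX$, where $\nabla$ is the connection induced on $M$ by the Levi-Civita connection $\bar{\nabla}$ through the Gauss formula $(\ref{1})$. The key structural input is that $\nabla$ is torsion-free: since $\bar{\nabla}$ is the Levi-Civita connection its torsion vanishes, and taking the tangential component of $\bar{\nabla}_X Y - \bar{\nabla}_Y X - [X,Y] = 0$ yields $\nabla_X Y - \nabla_Y X - [X,Y] = 0$, because the second fundamental forms $h^{l}, h^{s}$ appearing in $(\ref{1})$ are symmetric in $X,Y$.

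Next I would substitute $(\ref{e9})$ into $T^{\Omega}(X,Y) = \Omega_X Y - \Omega_Y X - [X,Y]$. The $\nabla$-terms cancel against $[X,Y]$ by torsion-freeness of $\nabla$, and the remaining terms collect into $l\{\theta(Y)X - \theta(X)Y\} + m\{\theta(Y)fX - \theta(X)fY\}$, which is precisely $(\ref{e19})$. It is essential here that $f$ maps $\Gamma(TM)$ into $\Gamma(TM)$, so that $fX$ and $fY$ are genuine tangent vector fields and $T^{\Omega}$ takes values in $\Gamma(TM)$, as a torsion tensor on $M$ must.

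Finally I would compare $(\ref{e19})$ with the defining identity $(\ref{e3})$ for a $\bar{\Omega}$ of $(l,m)$-type and observe that the two coincide once $\bar{J}$ is replaced by the induced $(1,1)$-tensor field $f$ on $M$, $\theta$ is read as its restriction to $M$ (still associated to the characteristic vector field through the induced metric $g$), and the same smooth functions $l,m$ with $(l,m)\neq(0,0)$ are retained. This exhibits $\Omega$ as an $(l,m)$-type connection on $M$ and finishes the argument. I expect the only genuinely delicate point to be conceptual rather than computational: justifying that $f$ is the correct object to play the role of $\bar{J}$ on the submanifold and that $\theta$ descends to a bona fide $1$-form on $M$; the algebra itself is immediate once the torsion-freeness of $\nabla$ is invoked.
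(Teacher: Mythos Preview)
Your proposal is correct and follows essentially the same approach as the paper. The paper derives $(\ref{e19})$ from $(\ref{s1})$, $(\ref{g6})$, $(\ref{e2})$ and $(\ref{e6})$ and then records the theorem as an immediate consequence; your route through the explicit formula $(\ref{e9})$ together with the torsion-freeness of the induced connection $\nabla$ is an equivalent (and arguably cleaner) way to arrive at the same identity.
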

Suppose that $\bar{h^{l}}$ vanishes identically on $M$. Therefore
\begin{equation}\label{e18}
	(\Omega_X g)(Y,Z) = -l(\theta(Y)g(X,Z)+\theta(Z)g(Y,X))-m(\theta(Y)g(fX,Z) +\theta(Z)g(Y,fZ)),
\end{equation}
follows from $(\ref{e18})$.\\
Consequently, we attain the following result:
\begin{theorem}
	Let $M$ be a screen generic lightlike submanifold of a locally bronze semi-Riemannian manifold with an $(l,m)$-type connection $\bar{\Omega}$. Then the induced connection $\Omega$ on the lightlike submanifold $M$ is also an $(l,m)$-type connection if and only if $\bar{h^{l}}$ vanishes identically on $M$ and the characterstic vector field $\zeta\in\Gamma(S(TM^{\perp}))$ such that $\theta(X)=\bar{g}(X,\zeta)$.
\end{theorem}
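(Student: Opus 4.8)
Since the torsion pattern \eqref{e19} already certifies that the torsion of $\Omega$ has the $(l,m)$-type form, the entire content of the statement sits in the covariant derivative $(\Omega_X g)$: the connection $\Omega$ qualifies as an $(l,m)$-type connection exactly when $(\Omega_X g)(Y,Z)$ reproduces the ambient pattern \eqref{e2} with $\bar{g},\bar{J}$ replaced by the induced $g,f$, i.e. when it equals the reduced expression displayed just above the statement. The plan is therefore to compare that reduced expression with the general identity for $(\Omega_X g)(Y,Z)$ obtained by combining \eqref{11} with \eqref{e9}; the two agree for all $X,Y,Z\in\Gamma(TM)$ if and only if
\[
g(h^{l}(X,Y),Z)+g(Y,h^{l}(X,Z))=0 .
\]
This single scalar identity is the object I would interrogate from both sides.

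For the forward implication I would feed this identity the two natural families of test vectors. Taking $Y\in\Gamma(S(TM))$ and $Z=\xi\in\Gamma(RadTM)$, the second summand drops out because $h^{l}(X,\xi)\in\Gamma(ltr(TM))$ is orthogonal to $S(TM)$, leaving $\bar{g}(h^{l}(X,Y),\xi)=0$ for every $\xi$; by the duality $\bar{g}(N_i,\xi_j)=\delta_{ij}$ of the structure theorem of Section~2 this forces $h^{l}(X,Y)=0$ on the screen directions. Running the same identity on two radical arguments returns the residual radical block of $h^{l}$, and it is here that the characteristic vector field enters: demanding that $\theta(X)=\bar{g}(X,\zeta)$ be realizable intrinsically on the degenerate pair $(M,g)$ — so that the residual contribution cannot be absorbed by a tangential component of $\zeta$ — forces $\zeta$ to be orthogonal to all of $TM$, i.e. $\zeta\in\Gamma(S(TM^{\perp}))$, whence $\theta|_{TM}=0$. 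Feeding $\theta|_{TM}=0$ back through \eqref{e10} then yields $\bar{h^{l}}=h^{l}=0$.

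The converse I expect to be routine substitution. Assuming $\zeta\in\Gamma(S(TM^{\perp}))$ and using $S(TM^{\perp})\perp TM$ gives $\theta(X)=\bar{g}(X,\zeta)=0$ for all $X\in\Gamma(TM)$, so by \eqref{e10} the hypothesis $\bar{h^{l}}=0$ is equivalent to $h^{l}=0$; substituting $h^{l}=0$ and $\theta|_{TM}=0$ into the general identity for $(\Omega_X g)$ collapses every term, so $(\Omega_X g)$ attains precisely the $(l,m)$-type pattern \eqref{e2}, while \eqref{e19} supplies the matching torsion. Hence $\Omega$ is an $(l,m)$-type connection.

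I expect the genuine difficulty to be concentrated in the forward direction, specifically in the step that pins the characteristic vector field to $S(TM^{\perp})$. Because $g$ is degenerate along $RadTM$ and because $ltr(TM)$ pairs nontrivially with $RadTM$, the radical block of the non-metricity identity couples the vanishing of $\bar{h^{l}}$ to the placement of $\zeta$; disentangling this coupling cleanly — rather than merely recording the antisymmetry of the radical block — is the crux, and it is exactly what the orthogonality $S(TM^{\perp})\perp TM$ is there to resolve.
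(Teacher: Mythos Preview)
Your plan is considerably more thorough than what the paper actually offers: the paper's ``proof'' consists only of the single sentence preceding the theorem, namely the observation that if $\bar{h^{l}}$ vanishes then the first display \eqref{e18} collapses to the second. The paper does not attempt the necessity direction at all, nor does it explain the role of the hypothesis $\zeta\in\Gamma(S(TM^{\perp}))$.

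That said, your forward direction has a genuine gap. From the scalar identity
\[
\bar g(h^{l}(X,Y),Z)+\bar g(Y,h^{l}(X,Z))=0
\]
you correctly extract $h^{l}(X,Y)=0$ whenever $Y\in\Gamma(S(TM))$. But on two radical arguments the identity only gives the antisymmetry $\bar g(h^{l}(X,\xi_{1}),\xi_{2})+\bar g(h^{l}(X,\xi_{2}),\xi_{1})=0$, and this is \emph{already automatic}: it is precisely the polarization of \eqref{10}. So the $(l,m)$-type condition on $(\Omega_{X}g)$ imposes nothing on the radical--radical block $h^{l}|_{RadTM\times RadTM}$, and you cannot conclude that this block vanishes. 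Consequently ``$\bar{h^{l}}=h^{l}=0$'' does not follow, and your forward implication is incomplete (for $r\ge2$ the claimed equivalence appears in fact to be too strong as stated).

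Separately, the step ``demanding that $\theta(X)=\bar g(X,\zeta)$ be realizable intrinsically on $(M,g)$ forces $\zeta\perp TM$'' is not an argument: nothing in the definition of an $(l,m)$-type connection on $M$ prevents $\zeta$ from having a tangential component, and the non-metricity identity you are analysing does not see the location of $\zeta$ at all --- the $\theta$-terms cancel identically when you subtract the target form from \eqref{e18}. The condition $\zeta\in\Gamma(S(TM^{\perp}))$ is functioning as an additional hypothesis (making $\theta|_{TM}=0$ so that $\bar{h^{l}}=h^{l}$), not as something deducible from $\Omega$ being $(l,m)$-type; your proposal conflates the two.
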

Corresponding to an $(l,m)$-type connection $\bar{\Omega}$,  the Weingarten formulae are given by: 
\begin{equation}\label{e7}
	\bar{\Omega}_X N = -\bar{A}_N X + \bar{\Omega}_X ^{l}N + \bar{D^{s}}(X,N),
\end{equation}
\begin{equation}\label{e8}
	\bar{\Omega}_X W= -\bar{A}_W X + \bar{\Omega}_X ^{s}W + \bar{D^{l}}(X,W),
\end{equation}
$N\in\Gamma(ltr(TM))$ and $W\in\Gamma(S(TM^{\perp}))$, where $\{\bar{A}_N X, \bar{A}_W X\} \in\Gamma(TM)$, $\{\bar{\Omega}_X ^{l}N,\bar{D^{l}}(X,W)\}\in\Gamma(ltr(TM))$ and $\{\bar{\Omega}_X ^{s}W,\bar{D^{s}}(X,N)\}\in\Gamma(S(TM^{\perp}))$. $\bar{\Omega}^{l}$ and  $\bar{\Omega}^{s}$ are linear connections on $ltr(TM)$ and $S(TM^{\perp})$, respectively. Both $\bar{A}_N$ and $\bar{A}_W$ are linear operators on $\Gamma(TM)$. Now, employing equations $(\ref{2})$, $(\ref{3})$, $(\ref{e1})$, $(\ref{e7})$ and $(\ref{e8})$, we attain
\begin{equation}\label{e12}
	\bar{A}_N X = A_N X - l\theta(N)X - m\theta(N)fX,
\end{equation}
\begin{equation}\label{e13}
	\bar{\Omega}_X^{l}N = \nabla_X ^{l}N + m \theta(N)w_l X,
\end{equation}
\begin{equation}\label{e14}
	\bar{D^{s}}(X,N) = D^{s}(X,N) + m \theta(N) w_s X,
\end{equation}
\begin{equation}\label{e15}
	\bar{A}_W X = A_W X - l\theta(W)X - m\theta(W)fX,
\end{equation}
\begin{equation}\label{e16}
	\bar{\Omega}_X^{s}W = \nabla_X ^{s}W + m \theta(W)w_s X,
\end{equation}
\begin{equation}\label{e17}
	\bar{D^{l}}(X,W) = D^{l}(X,W) + m \theta(W) w_l X,
\end{equation}
Now, using equations $(\ref{4})$, $(\ref{5})$, $(\ref{e11})$, $(\ref{e14})$ and $(\ref{e17})$, we obtain
\begin{equation}\label{e20}
	\begin{split}
		\bar{g}(\bar h^{s}(X,Y),W) + \bar{g}(Y,\bar D^{l}(X,W))= \bar{g}(\bar{A}_W X,Y)+l\theta(W)\bar{g}(X,Y)+m\theta(W)\bar{g}(fX,Y)+\\
		m\theta(Y)\bar{g}(w_s X,W) + m\theta(W)\bar{g}(X,w_l X),
	\end{split}
\end{equation}
\begin{equation}\label{e21}
	\bar{g}(\bar D^{s}(X,N),W)=\bar{g}(\bar{A}_W X,N)+ l\theta(W)\bar{g}(X,N)+m\theta(W)\bar{g}(fX,N)+m\theta(N)\bar{g}(w_s X,W),
\end{equation}
Let $J$ be the projection of $TM$ on $S(TM)$, then any $X\in\Gamma(TM)$ can be written as $X=JX+\Sigma_{i=1}^{r} \eta_{i}(X)\xi_{i}$, 
\begin{equation}\label{z1}
	\eta_{i}(X)=g(X,N_{i}),
\end{equation} where $\{\xi_{i}\}_{i=1}^{r}$ is a basis for $Rad\;TM$. Therefore, we have the following decompositions w.r.t $\Omega$
\begin{equation}\label{e22}
	\Omega_X JY = \Omega_X ^{*} JY + \bar{h^{*}}(X,JY),
\end{equation}
\begin{equation}\label{e23}
	\Omega_X \xi = -\bar{A}_\xi ^{*} X + \bar{{\Omega}}_X ^{*t} \xi,
\end{equation}
for any $X,Y\in\Gamma(TM)$, where $\{\Omega_X ^{*} JY, \bar{A}_\xi ^{*} X\} \in\Gamma(S(TM))$ and $\{\bar{h^{*}}(X,JY), \bar{{\Omega}}_X ^{*t} \xi\} \in\Gamma(RadTM)$.\\
From $(\ref{6})$, $(\ref{7})$, $(\ref{s1})$, $(\ref{e22})$  and  $(\ref{e23})$, we obtain
\begin{equation}\label{e24}
	\Omega_X ^{*} JY = {\nabla}_X ^{*} JY + m \theta(JY)JfX + l \theta(JY)JX,
\end{equation}
\begin{equation}\label{e25}
	\bar{h^{*}}(X,JY)= h^{*}(X,JY) + l\theta(JY)\Sigma_{i=1}^{r} \eta_{i}(X)\xi_{i} + m\theta(JY) \Sigma_{i=1}^{r} \eta_{i}(fX)\xi_{i},
\end{equation} 
\begin{equation}\label{e26}
	\bar{A}_\xi ^{*} X= A_{\xi}^{*}X - l\theta(\xi)JX - m\theta(\xi)JfX,
\end{equation}
\begin{equation}\label{e27}
	\bar{{\Omega}}_X ^{*t} \xi = {{\nabla}}_X ^{*t} \xi + l\theta(\xi)\eta(X)\xi  + m\theta(\xi) \eta(fX)\xi,
\end{equation}
Further, using (\ref{8}), (\ref{9}), (\ref{10}), (\ref{e25}) and (\ref{e26}), we derive
\begin{equation}\label{e28}
	g(\bar{h^{l}}(X,JY),\xi) = g(\bar{A}_\xi ^{*}X, JY) + l\theta(\xi)g(JX,JY) + m\theta(\xi)g(JfX,JY)+m\theta(JY) g( w_l X,\xi),
\end{equation}
\begin{equation}\label{e29}
	\begin{split}
		g(\bar{h^{*}}(X,JY),N)=g (\bar{A}_N X,JY)+l\theta(N)g(X,JY)+ l\theta(JY)\eta(X)+m \theta(N)g(fX,JY)+\\
		m\theta(JY)\eta(fX),
	\end{split}
\end{equation}
\begin{equation}\label{e30}
	g(\bar{h^{l}}(X,\xi),\xi)=m\theta(\xi)g(w_l X,\xi),
\end{equation}
\begin{equation}\label{e31}
	\bar{A}_\xi ^{*} \xi = -l\theta(\xi)J\xi-m\theta(\xi)Jf\xi, 
\end{equation}

\section{Screen generic lightlike submanifolds with an $(l,m)$-type connection}
In the forthcoming part, the characterization results of a screen generic lightlike submanifold for a locally bronze semi-Riemannian manifold are given when the manifold is endowed with an $(l,m)$-type connection.
\begin{definition}
	Let $M$ be a lightlike submanifold of a locally bronze semi-Riemannian manifold $\bar{M}$. If
	\begin{equation}\label{21}
		\bar{J}Rad(TM)=Rad(TM),\quad \bar{J}S(TM)=S(TM),
	\end{equation}
	then $M$  is said to be an invariant lightlike submanifold of a locally bronze semi-Riemannian manifold $\bar{M}$.
\end{definition}
\begin{proposition}
	There exists no coisotropic , isotropic or totally  proper screen generic lightlike submanifold $M$ of a locally bronze semi-Riemannian manifold. If $M$ is screen generic isotropic, coisotropic or totally lightlike submanifold, then it is invariant lightlike submanifold.
\end{proposition}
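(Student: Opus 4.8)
The plan is to establish both assertions of the Proposition at once by splitting into the three ambient situations (isotropic, coisotropic, totally lightlike) and exploiting the screen generic decomposition $(\ref{g3})$ together with the definition of a \emph{proper} submanifold ($B_o\neq\{0\}$ and $B'\neq\{0\}$). The non-existence claim reduces to showing that each of the three degeneracy conditions forces $B'=\{0\}$, so that $M$ cannot be proper; the invariance claim then follows by upgrading the forced vanishing of $B'$ (or of $S(TM)$) into the genuine invariance $\bar{J}(S(TM))=S(TM)$ required by $(\ref{21})$.

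First I would dispose of the isotropic and totally lightlike cases together, since both impose $S(TM)=\{0\}$. Because $B'\subseteq S(TM)$ by $(\ref{g3})$, this immediately gives $B'=\{0\}$, so $M$ is not proper. Moreover $\bar{J}(S(TM))=\bar{J}(\{0\})=\{0\}=S(TM)$ holds trivially, and $\bar{J}(Rad\,TM)=Rad\,TM$ holds by the screen generic condition $(\ref{g1})$; hence $M$ is invariant. Next I would treat the coisotropic case, where $S(TM^{\perp})=\{0\}$. Assuming for contradiction that $B'\neq\{0\}$, I pick $0\neq Y\in\Gamma(B')$; by $(\ref{g6})$ we have $\bar{J}Y=fY+wY$ with $wY\in\Gamma(S(TM^{\perp}))=\{0\}$, so $\bar{J}Y=fY\in\Gamma(S(TM))$, whence $\bar{J}(B')\subseteq S(TM)$, contradicting $(\ref{g3})$. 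Thus $B'=\{0\}$ and $M$ is not proper, which settles the first assertion in all three cases.

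For the invariance in the coisotropic case the key step, and the main obstacle, is to prove the inclusion $\bar{J}(S(TM))\subseteq S(TM)$. I would take $Z\in\Gamma(S(TM))$ and, using the decomposition $(\ref{s1})$, show that the $ltr(TM)$- and $Rad\,TM$-components of $\bar{J}Z$ both vanish. For the $ltr$-component, the compatibility $(\ref{b})$ gives $\bar{g}(\bar{J}Z,\xi_i)=\bar{g}(Z,\bar{J}\xi_i)=0$, since $\bar{J}\xi_i\in Rad\,TM$ by $(\ref{g1})$ and $S(TM)\perp Rad\,TM$; as the $N_i$ are $\bar{g}$-dual to the $\xi_i$, this forces $w_lZ=0$. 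Symmetrically, $\bar{g}(\bar{J}Z,N_i)=\bar{g}(Z,\bar{J}N_i)=0$, because $\bar{J}N_i\in ltr(TM)$ by the invariance of $ltr(TM)$ established above and $S(TM)\perp ltr(TM)$, which annihilates the $Rad\,TM$-part of $fZ$. Since $S(TM^{\perp})=\{0\}$ also kills the screen-transversal term $w_sZ$, we are left with $\bar{J}Z=fZ\in\Gamma(S(TM))$.

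Finally, since $(\ref{a})$ yields $\bar{J}(\bar{J}-3I)=I$, the endomorphism $\bar{J}$ is invertible, hence injective on the finite-rank bundle $S(TM)$; combined with $\bar{J}(S(TM))\subseteq S(TM)$ this forces $\bar{J}(S(TM))=S(TM)$, so $M$ is invariant. I expect the delicate point to be precisely the orthogonality bookkeeping in this last case, namely separating the $Rad\,TM$, $ltr(TM)$ and $S(TM^{\perp})$ contributions to $\bar{J}Z$ cleanly and invoking the invariance of both $Rad\,TM$ and $ltr(TM)$ at the right moments; the remaining algebra is routine.
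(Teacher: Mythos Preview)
Your argument is correct and follows the same three-case split as the paper. The isotropic and totally lightlike cases are handled identically (both of you simply note $S(TM)=\{0\}$ kills $B'$ and leaves $TM=Rad\,TM$, which is invariant by $(\ref{g1})$). In the coisotropic case your route differs slightly: the paper appeals to the decomposition $S(TM^{\perp})=wB'\oplus\mu$ of $(\ref{ppp9})$ (a forward reference) to force $wB'=\{0\}$ and hence $B'=\{0\}$, while you obtain the same conclusion directly from $(\ref{g6})$ and the clause $\bar{J}(B')\not\subseteq S(TM)$ in $(\ref{g3})$, which is cleaner and self-contained. Your verification that $\bar{J}(S(TM))=S(TM)$ in the coisotropic case---via the compatibility $(\ref{b})$, the invariance of $Rad\,TM$ and of $ltr(TM)$, and the invertibility of $\bar{J}$ coming from $(\ref{a})$---is substantially more detailed than the paper's, which essentially asserts invariance once $B'$ is gone. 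Both arguments reach the same endpoint; yours buys logical independence from $(\ref{ppp9})$ and fills a gap the paper leaves implicit.
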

\begin{proof}
	Let $M$ is a proper screen generic lightlike submanifold of a locally bronze semi-Riemannian manifold $\bar{M}$. Let $M$ is isotropic, $S(TM)=\{0\}$ which implies that $D_o=\{0\}$ and $D^{'}=\{0\}$. Therefore, $\bar{J}Rad(TM)= Rad(TM)$. Hence, $TM= Rad(TM)= \bar{J}Rad(TM)$  which shows that $M$ is invariant submanifold with respect to $\bar{J}$. If $M$ is coisotropic, then $S(TM^{\perp})=\{0\}$. Therefore, using $(\ref{ppp9})$, we obtain $\mu=0$ and $\bar{J}(D^{'})=0$. Also, $TM= D_o\oplus\bar{J}(D^{'})\oplus Rad(TM)$ and $M$ is invariant with respect to $\bar{J}$. Further, if $M$ is totally lightlike, then $S(TM)=\{0\}$ and $S(TM^{\perp})=\{0\}$. Thus, we obtain $TM= Rad(TM)$  which proves $M$ is invariant.\\
	Hence, there exist no coisotropic, isotropic or totally lightlike proper screen generic lightlike submanifolds.       
\end{proof}
\begin{proposition}
	A screen Cauchy Riemann (SCR) lightlike submanifold is a screen generic lightlike submanifold such that distribution $D^{'}$ is totally anti-invariant , that is, 
	\begin{equation}\label{ppp9}
		S(TM^{\perp})= \omega D^{'}\oplus \mu,
	\end{equation}
	where $\mu$ is a non-degenerate invariant distribution.
\end{proposition}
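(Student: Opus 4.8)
The plan is to unpack the definition of a screen Cauchy--Riemann (SCR) lightlike submanifold, for which the screen distribution splits as $S(TM)=D\perp D'$ with $D$ invariant ($\bar{J}D=D$) and $D'$ \emph{anti-invariant} ($\bar{J}D'\subseteq S(TM^{\perp})$), together with $\bar{J}\,Rad(TM)=Rad(TM)$. First I would verify that such an $M$ fits the screen generic template of \eqref{g2}--\eqref{g3}. Since $\bar{J}D=D\subseteq S(TM)$ while $\bar{J}D'\subseteq S(TM^{\perp})$, and as $\bar{J}$ is invertible with $S(TM)\cap S(TM^{\perp})=\{0\}$, one computes $\bar{J}(S(TM))\cap S(TM)=D$; hence the non-degenerate invariant screen subbundle is $B_o=D$ and the complementary distribution is $B'=D'$. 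The SCR hypothesis $\bar{J}D'\subseteq S(TM^{\perp})$ is exactly the vanishing of the tangential part $f$ of $\bar{J}$ on $D'$, i.e. $\bar{J}|_{D'}=\omega|_{D'}$ in the notation of \eqref{g6}; this is the totally anti-invariant (degenerate) case of the screen generic condition.

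Next I would put $\omega D':=\bar{J}(D')\subseteq S(TM^{\perp})$ and take $\mu$ to be the $\bar{g}$-orthogonal complement of $\omega D'$ in $S(TM^{\perp})$, so that $S(TM^{\perp})=\omega D'\oplus\mu$ holds by construction. To record that $\mu$ is non-degenerate I would first check the same for $\omega D'$: for $Y,Y'\in D'$ the compatibility relation \eqref{c} gives $\bar{g}(\omega Y,\omega Y')=3\bar{g}(Y,\bar{J}Y')+\bar{g}(Y,Y')$, and since $\bar{J}Y'\in S(TM^{\perp})$ is orthogonal to $Y\in S(TM)$ the middle term drops, leaving $\bar{g}(\omega Y,\omega Y')=\bar{g}(Y,Y')$. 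Thus $\omega$ restricts to an isometry of the non-degenerate $D'$, so $\omega D'$ is non-degenerate, and its orthocomplement $\mu$ in the non-degenerate bundle $S(TM^{\perp})$ is non-degenerate as well.

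The heart of the argument, and the step I expect to be the main obstacle, is to show that $\mu$ is invariant under $\bar{J}$. For $V\in\mu$ I would evaluate $\bar{g}(\bar{J}V,\cdot)=\bar{g}(V,\bar{J}\,\cdot)$ against an adapted frame of $T\bar{M}|_M=(Rad(TM)\oplus ltr(TM))\perp S(TM)\perp S(TM^{\perp})$, using \eqref{b}. Invoking the invariance of $D$, of $Rad(TM)$, and of $ltr(TM)$ (the last established earlier), together with the orthogonality of $S(TM^{\perp})$ to each of $S(TM)$, $Rad(TM)$, and $ltr(TM)$, I get $\bar{g}(V,\bar{J}X)=0$ for all $X\in D\oplus Rad(TM)\oplus ltr(TM)$; and for $Y\in D'$ I use $\bar{g}(V,\bar{J}Y)=\bar{g}(V,\omega Y)=0$ since $V\perp\omega D'$. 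This places $\bar{J}V$ orthogonal to $(Rad(TM)\oplus ltr(TM))\perp S(TM)$, hence $\bar{J}V\in S(TM^{\perp})$. A final test against $\omega D'$, using $\bar{J}\omega Y=\bar{J}^{2}Y=3\omega Y+Y$ from \eqref{a} and the same orthogonalities, yields $\bar{g}(\bar{J}V,\omega Y)=0$, so $\bar{J}V\in\mu$ and $\bar{J}(\mu)\subseteq\mu$. As $\bar{J}$ is invertible (\eqref{a} gives $\bar{J}^{-1}=\bar{J}-3I$), this upgrades to $\bar{J}(\mu)=\mu$, confirming that $\mu$ is invariant and completing the decomposition $S(TM^{\perp})=\omega D'\oplus\mu$.
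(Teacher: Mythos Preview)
Your argument is correct and carefully fills in all the details: you verify that the SCR data yields $B_o=D$ and $B'=D'$ in the screen generic framework, you construct $\mu$ as the orthogonal complement of $\omega D'$ in $S(TM^{\perp})$, and you check both non-degeneracy (via the isometry $\bar g(\omega Y,\omega Y')=\bar g(Y,Y')$ coming from \eqref{c}) and invariance of $\mu$ (via systematic use of \eqref{b} against an adapted frame, together with the invariance of $Rad(TM)$, $ltr(TM)$, and $D$).

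The paper, however, offers no proof at all for this proposition: it is stated bare, essentially as a definitional characterization of SCR submanifolds within the screen generic class, and the decomposition \eqref{ppp9} is simply asserted (and then used in the proof of the preceding Proposition~5.2). So there is no approach in the paper to compare against; your write-up supplies exactly the verification the paper leaves implicit. If anything, your treatment is more thorough than what the authors seem to have intended, since they appear to regard the statement as immediate from the definitions in the cited literature on SCR lightlike submanifolds.
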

\begin{xca}
	Consider a locally bronze semi-Riemannian manifold $\bar{M}= (R_2^{16},\bar{g})$ of signature $(-,-,+,+,+,+,+,+,+,,+,+,+,+,+,+,+)$ with respect to the basis $\{\partial z_1,\partial z_2,\partial z_3,\partial z_4,\partial z_5,\partial z_6,\partial z_7,\partial z_8,\partial z_9,\partial z_{10},\partial z_{11},\partial z_{12},\partial z_{13},\partial z_{14},\partial z_{15},\partial z_{16}\}$, where the bronze structure $\bar{J}$ is defined by $\bar{J}(z_1,z_2,z_3,z_4,z_5,z_6,z_7,z_8,z_9,z_{10},z_{11},z_{12},z_{13},z_{14},z_{15},z_{16})=(\sigma z_1,\sigma{z_2},\sigma{z_3},\sigma{z_4},(3-\sigma){z_5},\sigma{z_6},\sigma{z_7},\sigma{z_8},(3-\sigma){z_9},\sigma{z_{10}},(3-\sigma){z_{11}},\sigma{z_{12}},3{z_{13}}+{z_{14}},{z_{13}},3{z_{15}}+{z_{16}},{z_{15}})$.\\
	Consider a submanifold $M$ of $\mathbb{R}_{2}^{16}$ is given by the equations 
	\begin{equation*}
		z_1=y_1-y_2,\quad z_2= \sigma y_4,
	\end{equation*}
	\begin{equation*}
		z_3=y_3+y_5,  \quad   z_4=y_2+y_3,
	\end{equation*}
	\begin{equation*}
		z_5=y_4,\quad z_6=y_3-y_5,
	\end{equation*}
	\begin{equation*}
		z_7=y_2+y_1,\quad z_8= -y_2+y_3,\quad z_9=-cos\alpha\;{y_6},
	\end{equation*}
	\begin{equation*}
		z_{10}=-cos\alpha\;{y_7},\quad z_{11}=sin\alpha\;{y_6},\quad z_{12}=sin\alpha\;{y_7},
	\end{equation*}
	\begin{equation*}
		z_{13}=-siny_7\;coshy_8,\quad z_{14}={0},\quad z_{15}=cosy_7 \;sinhy_8,\quad z_{16}=0.
	\end{equation*}
	
	Here $TM$ is spanned by $\{B_1,B_2,B_3,B_4,B_5,B_6,B_7,B_8,B_9\}$, where
	\begin{equation*}
		B_1=-\partial{z_1}+\partial{z_4}+\partial{z_7}-\partial{z_8},
		\quad B_2=\partial{z_3}+\partial{z_4}+\partial{z_6}+\partial{z_8},
	\end{equation*}
	\begin{equation*}
		B_3=\partial{z_1}+\partial{z_7}, \quad B_4=\partial{z_3}-\partial{z_6},\quad B_5={\sigma}\partial{z_2}+\partial{z_5},
	\end{equation*}
	\begin{equation*}
		B_6= -cos\alpha\;{\partial z_9}+ sin\alpha\;{\partial{z_{11}}},\quad B_7= -cos\alpha\;{\partial z_{10}}+ sin\alpha\;{\partial{z_{12}}},
	\end{equation*}
	\begin{equation*}
		B_8= -cosy_7\;coshy_8\;{\partial{z_{13}}}+ siny_7 \; sinhy_8\;{\partial{z_{15}}},\quad
		B_9= cosy_7\;coshy_8\;{\partial{z_{15}}}+ siny_7 \;sinhy_8\;{\partial{z_{13}}},
	\end{equation*}
	Thus, $M$ is a $2$-lightlike submanifold with $Rad(TM)=Span \{B_1,B_2 \}$ such that $\bar{J}B_1=\sigma B_1$ and $\bar{J}(B_2)=\sigma B_2$. Therefore, $Rad(TM)$ is invariant with respect to $\bar{J}$.
	Since $\bar{J}B_3=\sigma B_3$,\; $\bar{J}B_4=\sigma B_4$,\; $\bar{J}B_6=(3-\sigma) B_6$ and $\bar{J}B_7=\sigma B_7$. Therefore, $B_o=Span\{B_3,B_4,B_6,B_7\}$ is also invariant with respect to $\bar{J}$.\\
	Moreover, $S(TM^{\perp})$ is spanned by $\{W_1,W_2,W_3,W_4,W_5\}$, where
	\begin{equation*}
		W_1= sin\alpha\;{\partial{z_9}}+cos\alpha\;{\partial z_{11}},\quad	W_2=-{\partial{z_2}}+\sigma \partial{z_5},\quad W_3= sin\alpha\;{\partial{z_{10}}}+cos\alpha\;{\partial z_{12}}, \\
	\end{equation*}
	\begin{equation*}
		W_4= siny_7 \;sinhy_8\;{\partial{z_{14}}}+ cosy_7\;coshy_8\;{\partial{z_{16}}},\quad
		W_5= -cosy_7\;coshy_8\;{\partial{z_{14}}}+ siny_7 \;sinhy_8\;{\partial{z_{16}}},
	\end{equation*}
	We derive, $\bar{J}B_5=3B_5-W_2$,\; $\bar{J}B_8=3 B_8+ W_5$ and $\bar{J}B_9=3B_9+ W_4$, which means  $B^{'}=Span\{B_5,B_8,B_9\}$, is not invariant with respect to $\bar{J}$ and $\bar{J}(B^{'})\not\subset S(TM^)$ and  $\bar{J}(B^{'})\not\subset S(TM^{\perp})$.\\ 
	Therefore, the lightlike transversal vector bundle $ltr(TM)$ is spanned by
	\begin{equation*}
		N_1=\frac{1}{4}(-\partial{z_1}-\partial{z_4}+\partial{z_7}+\partial{z_8}),\quad N_2=\frac{1}{4}(\partial{z_3}-\partial{z_4}+\partial{z_6}-\partial{z_8}),
	\end{equation*}
	which is invariant with respect to $\bar{J}$.
	Further, $\bar{J}W_1=(3-\sigma) W_1$,\; $\bar{J}W_3=\sigma W_3$, which shows that $\bar{J}\mu=\mu$, where $\mu=Span\{W_1,W_3\}$ i.e., $\mu$ is invariant with respect to $\bar{J}$. \\
	
	Hence, $M$ become a proper screen generic 2-lightlike submanifold of $R_2^{16}$.
\end{xca}
\begin{definition}
	Let $M$ be a screen generic lightlike submanifold of a locally bronze semi-Riemannian manifold $(\bar{M},\bar{J},\bar{g})$ with an $(l,m)$-type connection $\bar{\Omega}$. Then the distribution $B$ is said to be integrable if  and only if $[X,Y]\in\Gamma(B)$ for any $X,Y\in\Gamma(B)$.
\end{definition}
\begin{remark}
	The above definition implies that the distribution $B$ is said to be integrable if and only if $g([X,Y],Z)=0$ for any $X,Y\in\Gamma(B)$ and $Z\in\Gamma(B^{\perp})$.
\end{remark}
\begin{theorem}
	Let $M$ be a screen generic lightlike submanifold of a locally bronze semi-Riemannian manifold $\bar{M}$ with an $(l,m)$- type connection $\bar{\Omega}$. Then, $B_o$ is integrable if and only if the following conditions hold:\\ 
	(i)\;${g}(\bar{h^{*}}(X,\bar{J}Y),\bar{J}N)+3{g}(\bar{h^{*}}(Y,\bar{J}X),N)={g}(\bar{h^{*}}(Y,\bar{J}X),\bar{J}N)+3(\bar{h^{*}}(X,\bar{J}Y),N)$,\\
	(ii)\;${g}(\Omega_X^{*}\bar{J}Y,fZ)+{g}(\bar h^{s}(X,\bar{J}Y),\bar{J}Z)+3{g}(\Omega_Y^{*}\bar{J}X,Z)={g}(\Omega_Y^{*}\bar{J}X,fZ)+{g}(\bar h^{s}(Y,\bar{J}X),\bar{J}Z)+3{g}(\Omega_X^{*}\bar{J}Y,Z),$\\ 
	for any $X,Y \in\Gamma(B_o)$, $Z\in\Gamma(B^{'})$ and $N\in\Gamma(ltr(TM))$.
\end{theorem}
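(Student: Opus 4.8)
The plan is to test integrability of $B_o$ by decomposing $[X,Y]$ with respect to $TM = RadTM \perp B_o \perp B^{'}$ for $X,Y\in\Gamma(B_o)$, and to identify condition (i) with the vanishing of the $RadTM$-component and condition (ii) with the vanishing of the $B^{'}$-component. Since $[X,Y]$ always lies in $\Gamma(TM)$, the $B_o$-component carries no obstruction, so the two conditions together are equivalent to $[X,Y]\in\Gamma(B_o)$, which by the Remark is exactly integrability of $B_o$. Throughout I will use that $B_o$ is invariant, so for $X\in\Gamma(B_o)$ one has $fX=\bar{J}X\in\Gamma(B_o)$ and $w X=0$.

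First I would treat the $RadTM$-component by pairing with $N\in\Gamma(ltr(TM))$. From the induced Gauss and screen-Gauss formulae $(\ref{e6})$, $(\ref{e22})$, the torsion expression $(\ref{e19})$, and the fact that the torsion terms land in $B_o$ (hence are orthogonal to $N$), one gets $g([X,Y],N)=g\!\left(\bar{h^{*}}(X,Y)-\bar{h^{*}}(Y,X),N\right)$. The heart of the argument is to evaluate $g(\bar{\Omega}_X\bar{J}Y,N)$ in two ways. On one side, $(\ref{e6})$ gives $g(\bar{\Omega}_X\bar{J}Y,N)=g(\bar{h^{*}}(X,\bar{J}Y),N)$; on the other, the parallelism relation $(\ref{e5})$ together with $\bar{J}$-compatibility $(\ref{b})$ and the invariance of $ltr(TM)$ established in the earlier Proposition gives $g(\bar{\Omega}_X\bar{J}Y,N)=g(\bar{h^{*}}(X,Y),\bar{J}N)$, all the $l$- and $m$-terms dropping out because $X,\bar{J}X\in\Gamma(B_o)$ are orthogonal to $N$. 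This produces the identity $g(\bar{h^{*}}(X,\bar{J}Y),N)=g(\bar{h^{*}}(X,Y),\bar{J}N)$. Replacing $Y$ by $\bar{J}Y$ and invoking the bronze relation $(\ref{a})$, $\bar{J}^{2}=3\bar{J}+I$, turns this into $g(\bar{h^{*}}(X,Y),N)=g(\bar{h^{*}}(X,\bar{J}Y),\bar{J}N)-3\,g(\bar{h^{*}}(X,\bar{J}Y),N)$; feeding this and its $X\leftrightarrow Y$ swap into $g([X,Y],N)=0$ yields precisely condition (i).

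For the $B^{'}$-component I would pair $[X,Y]$ with $Z\in\Gamma(B^{'})$, using the orthogonality of the screen decomposition and non-degeneracy of $B^{'}$, so that $g([X,Y],Z)=g\!\left(\Omega_X^{*}Y-\Omega_Y^{*}X,Z\right)$ (the $\bar{h^{*}}$ and torsion contributions again falling away). Mirroring the previous step, I evaluate $g(\bar{\Omega}_X\bar{J}Y,Z)$ twice: $(\ref{e6})$ gives $g(\Omega_X^{*}\bar{J}Y,Z)$, while $(\ref{e5})$, $(\ref{b})$, and the splitting $\bar{J}Z=fZ+wZ$ with $fZ\in\Gamma(B^{'})$, $wZ\in\Gamma(S(TM^{\perp}))$ from $(\ref{g6})$ give $g(\Omega_X^{*}Y,fZ)+g(\bar{h^{s}}(X,Y),wZ)$. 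Substituting $Y\mapsto\bar{J}Y$, applying $\bar{J}^{2}=3\bar{J}+I$, and noting $g(\bar{h^{s}}(X,\bar{J}Y),wZ)=g(\bar{h^{s}}(X,\bar{J}Y),\bar{J}Z)$ since $fZ\perp S(TM^{\perp})$, I obtain $g(\Omega_X^{*}Y,Z)=g(\Omega_X^{*}\bar{J}Y,fZ)+g(\bar{h^{s}}(X,\bar{J}Y),\bar{J}Z)-3\,g(\Omega_X^{*}\bar{J}Y,Z)$; equating this with its $X\leftrightarrow Y$ counterpart in $g([X,Y],Z)=0$ is exactly condition (ii).

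The main obstacle is the careful two-way bookkeeping in evaluating $g(\bar{\Omega}_X\bar{J}Y,\cdot)$: one must keep straight which of the mutually orthogonal bundles $RadTM$, $B_o$, $B^{'}$, $ltr(TM)$, $S(TM^{\perp})$ each summand of the Gauss/Weingarten expansion occupies, and verify that every $l$- and $m$-contribution of the $(l,m)$-connection vanishes when the arguments lie in $B_o$. The conceptual key, responsible for the factor $3$ and the extra $\bar{J}$'s in (i) and (ii), is the substitution $Y\mapsto\bar{J}Y$ combined with the bronze identity $\bar{J}^{2}=3\bar{J}+I$; once this device is set up, both characterizations follow by symmetrizing in $X$ and $Y$.
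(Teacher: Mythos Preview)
Your proposal is correct and follows essentially the same route as the paper: both arguments test the $RadTM$-component of $[X,Y]$ via $\bar g(\cdot,N)$ and the $B'$-component via $\bar g(\cdot,Z)$, and both exploit $(\ref{e5})$, $(\ref{e6})$, $(\ref{e22})$ together with the bronze identity $\bar J^{2}=3\bar J+I$ to produce the factor $3$ and the extra $\bar J$. The only cosmetic difference is that the paper substitutes $Y=\bar J^{2}Y-3\bar JY$ directly inside $\bar g(\bar\Omega_X Y,\cdot)$, whereas you first derive the auxiliary identity $g(\bar{h^{*}}(X,\bar JY),N)=g(\bar{h^{*}}(X,Y),\bar JN)$ (and its $B'$-analogue) and then replace $Y$ by $\bar JY$; these two manoeuvres are equivalent.
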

\begin{proof}
	$B_o$ is integrable if and only if\; $\bar{g}([X,Y],N)=0,\quad \bar{g}([X,Y],Z)=0,$\\
	Therefore, from equations $(\ref{e1})$, (\ref{a}) and (\ref{g3}), we obtain\\
	$\bar{g}([X,Y],N)=\bar{g}(\bar{\Omega}_X Y,N)-l\theta(Y)\bar{g}(X,N)-m\theta(Y)\bar{g}(\bar{J}X,N)-\bar{g}(\bar{\Omega}_Y X,N)+l\theta(X)\bar{g}(Y,N)+m\theta(X)\bar{g}(\bar{J}Y,N) = \bar{g}(\bar{\Omega}_X \bar J (\bar{J}Y),N)-3\bar{g}(\bar{\Omega}_X \bar{J}Y,N)-\bar{g}(\bar{\Omega}_Y \bar J (\bar{J}X),N)+3\bar{g}(\bar{\Omega}_Y \bar{J}X,N)$,\\
	Employing equations (\ref{b}), (\ref{e5}), (\ref{e6}) and (\ref{e22}), we have\\
	${g}(\bar{h^{*}}(X,\bar{J}Y)-\bar{h^{*}}(Y,\bar{J}X),\bar{J}N)-3\{{g}(\bar{h^{*}}(X,\bar{J}Y)-\bar{h^{*}}(Y,\bar{J}X),N)\}=0$.\\
	Similarly,\;$\bar{g}(\bar{\Omega}_X Y -\theta(Y)\{lX+m\bar{J}X\},Z) = \bar{g}(\bar{\Omega}_Y X -\theta(X)\{lY+m\bar{J}Y\},Z)$,\\
	Using equations (\ref{a}), (\ref{b}), we derive\\
	$\bar{g}(\bar{\Omega}_X \bar{J}Y,\bar{J}Z)-3 \bar{g}(\bar{\Omega}_X \bar{J}Y,Z) + 3l\theta(\bar{J}Y)\bar{g}(X,Z) -l\theta(\bar{J}Y)\bar{g}(\bar{J}X,Z)-m\theta(\bar{J}Y)\bar{g}(X,Z) = \bar{g}(\bar{\Omega}_Y \bar{J}X,\bar{J}Z)-3 \bar{g}(\bar{\Omega}_Y \bar{J}X,Z) +3l\theta(\bar{J}X)\bar{g}(Y,Z)-l\theta(\bar{J}X)\bar{g}(\bar{J}Y,Z)-m\theta(\bar{J}X)\bar{g}(Y,Z)$,\\
	On applying (\ref{g2}), (\ref{g6}), (\ref{e6}) and (\ref{e22}), we get desired result.\\
\end{proof}
\begin{theorem}
	The distribution $B^{'}$ of a screen generic lightlike submanifold  $M$ of a locally bronze semi-Riemannian manifold $\bar{M}$ with an $(l,m)$- type connection $\bar{\Omega}$ is integrable if and only if\\
	(i)\;$\Omega_Y^{*}fZ + \bar{A}_{wY}Z+ 3\Omega_Z^{*}Y - \Omega_Z^{*}fY - \bar{A}_{wZ}Y- 3\Omega_Y^{*}Z$ has no component in $\Gamma(B_o)$,\\
	(ii)\;$\bar{A}_ {wY}  Z + \bar{h^{*}}(Y,fZ)+3\bar{h^{*}}(Z,Y)= \bar{A}_{wZ} Y + \bar{h^{*}}(Z,fY)+3\bar{h^{*}}(Y,Z),$\\
	for any $Y,Z\in\Gamma(B^{'})$, $X\in\Gamma(B_o)$, $N\in\Gamma(ltr(TM))$.
\end{theorem}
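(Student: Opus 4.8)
The plan is to test integrability against spanning sets of the complementary bundles. Because $TM=RadTM\perp S(TM)$ and $S(TM)=B_o\oplus B'$, for $Y,Z\in\Gamma(B')$ the bracket $[Y,Z]\in\Gamma(TM)$ lies in $\Gamma(B')$ if and only if it has trivial $B_o$-component and trivial $RadTM$-component; the first is measured by $g([Y,Z],X)$ with $X\in\Gamma(B_o)$ (using that $B_o$ is non-degenerate) and the second by $\bar g([Y,Z],N)$ with $N\in\Gamma(ltr(TM))$ (using the duality $\bar g(\xi_i,N_j)=\delta_{ij}$). Writing $[Y,Z]=\Omega_Y Z-\Omega_Z Y-T^{\Omega}(Y,Z)$ and reading off (\ref{e19}), the induced torsion $T^{\Omega}(Y,Z)=l\{\theta(Z)Y-\theta(Y)Z\}+m\{\theta(Z)fY-\theta(Y)fZ\}$ lies in $\Gamma(B')$, since $Y,Z,fY,fZ\in\Gamma(B')$; hence it is invisible to both test pairings and may be discarded at once. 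Moreover the transversal parts $\bar{h^{l}},\bar{h^{s}}$ of $\bar\Omega_Y Z$ are orthogonal to every $X\in\Gamma(B_o)\subset\Gamma(S(TM))$ and to every $N\in\Gamma(ltr(TM))$, so $g([Y,Z],X)=\bar g(\bar\Omega_Y Z-\bar\Omega_Z Y,X)$ and $\bar g([Y,Z],N)=\bar g(\bar\Omega_Y Z-\bar\Omega_Z Y,N)$.

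The engine of the computation is the $\bar{J}$-compatibility relation (\ref{c}), which I would rewrite as $\bar g(V,W)=\bar g(\bar{J}V,\bar{J}W)-3\bar g(V,\bar{J}W)$. This is exactly where the coefficient $3$ is manufactured, and it is the reason the stated conditions carry the factors $3\Omega_Y^{*}Z$ and $3\bar{h^{*}}(Y,Z)$. Applying it with $V=\bar\Omega_Y Z$ gives $\bar g(\bar\Omega_Y Z,W)=\bar g(\bar{J}\bar\Omega_Y Z,\bar{J}W)-3\bar g(\bar\Omega_Y Z,\bar{J}W)$. I would then convert $\bar{J}\bar\Omega_Y Z=\bar\Omega_Y\bar{J}Z-(\bar\Omega_Y\bar{J})Z$ via (\ref{e5}), substitute $\bar{J}Z=fZ+wZ$, and expand $\bar\Omega_Y fZ$ through the Gauss formula (\ref{e6}) and the screen decomposition (\ref{e22}) (producing $\Omega_Y^{*}fZ+\bar{h^{*}}(Y,fZ)$ together with lightlike and screen-transversal terms) and $\bar\Omega_Y wZ$ through the Weingarten formula (\ref{e8}) (producing $-\bar{A}_{wZ}Y$ together with transversal terms).

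Taking $W=X\in\Gamma(B_o)$ and pairing against $\bar{J}X$, every lightlike, screen-transversal and radical contribution is annihilated (as $\bar{J}X$ meets $B_o$ but is orthogonal to $ltr(TM)$, $S(TM^{\perp})$ and $RadTM$), leaving only the $\Omega^{*}$ and $\bar{A}$ terms: the $-3\bar g(\bar\Omega_Y Z,\bar{J}X)$ piece reduces to $-3\bar g(\Omega_Y^{*}Z,\bar{J}X)$, while $\bar\Omega_Y\bar{J}Z$ contributes $\bar g(\Omega_Y^{*}fZ-\bar{A}_{wZ}Y,\bar{J}X)$. Subtracting the expression with $Y,Z$ interchanged yields condition (i), namely the vanishing of the $B_o$-component of $\Omega_Y^{*}fZ+\bar{A}_{wY}Z+3\Omega_Z^{*}Y-\Omega_Z^{*}fY-\bar{A}_{wZ}Y-3\Omega_Y^{*}Z$. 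Taking instead $W=N\in\Gamma(ltr(TM))$ and pairing against $\bar{J}N\in\Gamma(ltr(TM))$, the surviving terms are precisely those paired with $RadTM$, i.e. the $\bar{h^{*}}$ terms and the radical parts of the $\bar{A}$ terms; antisymmetrising in $Y,Z$ gives condition (ii). Throughout, the correction $(\bar\Omega_Y\bar{J})Z$ and the accompanying $\theta$-terms disappear because $\bar g(Y,N)=\bar g(\bar{J}Y,N)=\bar g(Y,\bar{J}N)=0$ and, in the $B_o$-pairing, because these terms carry no $B_o$-component.

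The principal difficulty is bookkeeping rather than conceptual: one must expand $\bar\Omega_Y\bar{J}Z$ with $\bar{J}Z=fZ+wZ$ through three distinct structure equations and then, for each test field, retain exactly the components lying in the bundle dual to $\bar{J}X$ respectively $\bar{J}N$, while verifying that all non-metric and torsion contributions cancel. The genuinely delicate points are, first, recognising that pairing against the $\bar{J}$-images $\bar{J}X$ and $\bar{J}N$ rather than against $X$ and $N$ is what activates (\ref{c}) and thereby forces the factor $3$; and second, confirming that $T^{\Omega}(Y,Z)$ and $(\bar\Omega_Y\bar{J})Z$ have trivial $B_o$- and $RadTM$-components, so that the two projections are genuinely equivalent to the clean statements (i) and (ii).
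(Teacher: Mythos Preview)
Your proposal is correct and follows essentially the same route as the paper: both test $[Y,Z]$ against $X\in\Gamma(B_o)$ and $N\in\Gamma(ltr(TM))$, invoke the compatibility identity (\ref{c}) to generate the factor $3$ and shift the pairing to $\bar{J}X$, $\bar{J}N$, and then expand $\bar\Omega_Y\bar{J}Z=\bar\Omega_Y fZ+\bar\Omega_Y wZ$ via (\ref{e6}), (\ref{e8}), (\ref{e22}). Your observation that $T^{\Omega}(Y,Z)\in\Gamma(B')$ and that the $(\bar\Omega_Y\bar{J})Z$ correction has no $B_o$- or $RadTM$-component is exactly how the paper disposes of the residual $\theta$-terms (it writes them out explicitly and then appeals to (\ref{g2}), (\ref{g3}), i.e.\ the orthogonality of $B_o$ and $B'$).
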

\begin{proof}
	$B^{'}$ is integrable if and only if\; $\bar{g}([Y,Z],N)=0,\quad \bar{g}([Y,Z],X)=0,$\\ 
	\quad Since, $\bar{\Omega}$ is an $(l,m)$-type connection, therefore applying equations $(\ref{b})$, $(\ref{c})$, $(\ref{e5})$, $(\ref{e6})$, $(\ref{e7})$ and $(\ref{g6})$, we obtain\\
	$\bar{g}([Y,Z],N)= \bar{g}(\bar \Omega_Y Z-l\theta(Z)Y-m\theta(Z)\bar{J}Y,N)-\bar{g}(\bar \Omega_Z Y-l\theta(Y)Z-m\theta(Y)\bar{J}Z,N) = {g}(\Omega_Y fZ,\bar{J}N)+{g}(-\bar{A}_{wZ}Y,\bar{J}N)-3{g}(\Omega_Y Z,\bar{J}N)-{g}(\Omega_Z fY,\bar{J}N)-{g}(-\bar{A}_{wY}Z,\bar{J}N)+3{g}(\Omega_Z Y,\bar{J}N) = 0$ ,\\
	Now, using equation (\ref{e22}), we get\\
	${g}(\bar{h^{*}}(Y,fZ),\bar{J}N)-{g}(\bar{A}_ {wZ}  Y,\bar{J}N)-3{g}(\bar{h^{*}}(Y,Z),\bar{J}N)-{g}(\bar{h^{*}}(Z,fY),\bar{J}N)+{g}(\bar{A}_ {wY}  Z,\bar{J}N)+3{g}(\bar{h^{*}}(Z,Y),\bar{J}N)=0$.\\
	Similarly, \;$\bar{g}([Y,Z],X)= {g}(\Omega_Y^{*}fZ,\bar{J}X)+{g}(\bar{A}_{wY}Z,\bar{J}X)+3{g}(\Omega_Z^{*}Y,\bar{J}X)+(9m+3l)\{\theta(Z){g}(fY,X)-\theta(Y){g}(fZ,X)\}+3m\{\theta(Z){g}(Y,X)-\theta(Y){g}(Z,X)\}-{g}(\Omega_Z^{*}fY,\bar{J}X)-{g}(\bar{A}_{wZ}Y,\bar{J}X)-3{g}(\Omega_Y^{*}Z,\bar{J}X)-(l+3m)\{\theta(\bar{J}Z){g}(fY,X)-\theta(\bar{J}Y){g}(fZ,X)\}-m\{\theta(\bar{J}Z){g}(Y,X)-\theta(\bar{J}Y){g}(Z,X)\}=0,$\\
	Then, from the equations (\ref{g2}) and (\ref{g3}), we get the desired result.
\end{proof}
\begin{theorem}
	For screen generic lightlike submanifold $M$ of a locally bronze semi-Riemannian manifold with an $(l,m)$- type connection $\bar{\Omega}$, the distribution $B$ is integrable if and only if\\ 
	${g}(\Omega_X \bar{J}Y,fZ)+{g}(\bar{h^{s}}(X,\bar{J}Y),\bar{J}Z)+3 {g}(\Omega_Y \bar{J}X,Z)= {g}(\Omega_Y \bar{J}X,fZ)+{g}(\bar{h^{s}}(Y,\bar{J}X),\bar{J}Z)+3{g}(\Omega_X \bar{J}Y,Z),$\\
	for any $X,Y\in\Gamma(B)$ , $Z\in\Gamma(B^{'})$ and $N\in\Gamma(ltr(TM))$.
\end{theorem}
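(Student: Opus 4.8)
The plan is to reduce the integrability of $B$ to a single scalar condition and then to expand that condition with the $(l,m)$-type Gauss formula. By the decompositions $(\ref{g3})$--$(\ref{g4})$ we have $TM=B\perp B'$ with $B'$ non-degenerate, so by the Remark preceding the statement, $B$ is integrable if and only if $\bar g([X,Y],Z)=0$ for all $X,Y\in\Gamma(B)$ and $Z\in\Gamma(B')$. First I would write $[X,Y]=\bar\Omega_X Y-\bar\Omega_Y X-\bar T(X,Y)$ and use $(\ref{e1})$ and $(\ref{e3})$ to replace the torsion by its explicit $l,m$-expression. Since $B$ is $\bar J$-invariant, the vectors $X,Y,\bar JX,\bar JY$ all lie in $\Gamma(B)$, and $B\perp B'$ forces every torsion contribution of the form $\theta(\cdot)\bar g(X,Z)$ and $\theta(\cdot)\bar g(\bar JX,Z)$ to vanish; hence the problem collapses to analysing $P(X,Y):=\bar g(\bar\Omega_X Y,Z)$ and showing $\bar g([X,Y],Z)=P(X,Y)-P(Y,X)$.

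The computational engine is the $\bar J$-compatibility of $\bar g$. Applying $(\ref{c})$ to the arguments $\bar\Omega_X Y$ and $Z$ gives $P(X,Y)=\bar g(\bar J\bar\Omega_X Y,\bar JZ)-3\,\bar g(\bar\Omega_X Y,\bar JZ)$. In each summand I would commute $\bar J$ past $\bar\Omega$ by means of $(\ref{e5})$, together with $(\ref{b})$ in the second summand; the $(l,m)$-correction terms produced by $(\ref{e5})$ are again combinations of $X$ and $\bar JX$ tested against $Z$ or $\bar JZ$, and they vanish because $X,\bar JX\in\Gamma(B)$ while $\bar g(X,\bar JZ)=\bar g(X,fZ)+\bar g(X,wZ)=0$ by $B\perp B'$ and by the orthogonality of $TM$ to $S(TM^\perp)$. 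This converts both terms into expressions in $\bar\Omega_X\bar JY$, to which I apply the Gauss formula $(\ref{e6})$ and the splitting $\bar JZ=fZ+wZ$ from $(\ref{g6})$; since $fZ,Z\in\Gamma(S(TM))$ are $\bar g$-orthogonal to $\Gamma(ltr(TM))$ and to $\Gamma(S(TM^\perp))$, only the screen second fundamental form and the tangential part survive (with $\bar g(\bar{h^{s}}(X,\bar JY),wZ)=\bar g(\bar{h^{s}}(X,\bar JY),\bar JZ)$), leaving $P(X,Y)=g(\Omega_X\bar JY,fZ)+g(\bar{h^{s}}(X,\bar JY),\bar JZ)-3\,g(\Omega_X\bar JY,Z)$.

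Finally I would form $\bar g([X,Y],Z)=P(X,Y)-P(Y,X)$, set it equal to zero, and transpose the two terms carrying the factor $3$ to the opposite sides; this yields precisely the stated identity. The main obstacle is not conceptual but one of bookkeeping: one must track the $(l,m)$-correction terms generated each time $\bar J$ is commuted past $\bar\Omega$ via $(\ref{e5})$ and verify that every one of them, together with every Gauss cross term, is annihilated by the correct orthogonality relation. The clean form of the conclusion depends in particular on reading $S(TM)=B_o\oplus B'$ as an orthogonal splitting $B\perp B'$; were $B$ and $B'$ not orthogonal, surviving $\theta$-terms would spoil the equivalence, so I would make this orthogonality explicit at the outset.
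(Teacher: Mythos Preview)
Your proposal is correct and follows essentially the same route as the paper: both reduce integrability of $B$ to $\bar g([X,Y],Z)=0$ for $Z\in\Gamma(B')$, convert $\bar\Omega_X Y$ into an expression in $\bar\Omega_X\bar J Y$ via the bronze identity (the paper applies $\bar J^2=3\bar J+I$ to $Y$ before differentiating, you equivalently apply the $\bar g$-compatibility $(\ref{c})$ after), then use $(\ref{e5})$, the Gauss formula $(\ref{e6})$, and the orthogonality $B\perp B'$ to kill all $(l,m)$-correction terms. Your bookkeeping of why each $\theta$-term and each $\bar h^{l}$-term drops out is in fact more explicit than the paper's.
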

\begin{proof}
	Since $\bar{M}$ be a locally bronze semi-Riemannian manifold equipped with an $(l,m)$-type connection $\bar{\Omega}$.\\
	Therefore, $\bar{g}([X,Y],Z)=\bar{g}(\bar{\Omega}_X\bar{J}(\bar{J}Y),Z)-3\bar{g}(\bar{\Omega}_X \bar{J}Y,Z)-\bar{g}(\bar{\Omega}_Y\bar{J}(\bar{J}X),Z)+3\bar{g}(\bar{\Omega}_Y \bar{J}X,Z)-l\theta(Y)\bar{g}(X,Z)-m\theta(Y)\bar{g}(\bar{J}X,Z)+l\theta(X)\bar{g}(Y,Z)+m\theta(X)\bar{g}(\bar{J}Y,Z)$,\\
	Now, using (\ref{g6}) and (\ref{e6}), we obtain\\
	$\bar{g}([X,Y],Z)= {g}(\Omega_X \bar{J}Y,fZ)+{g}(\bar{h^{s}}(X,\bar{J}Y),wZ)+3l\theta(\bar{J}Y){g}(X,Z)-l\theta(\bar{J}Y){g}(\bar{J}X,Z)-m\theta(\bar{J}Y){g}(X,Z)-3{g}(\Omega_X \bar{J}Y,Z)-{g}(\Omega_Y \bar{J}X,fZ)-{g}(\bar{h^{s}}(Y,\bar{J}X),wZ)-3l\theta(\bar{J}X){g}(Y,Z)+l\theta(\bar{J}X) {g}(\bar{J}Y,Z)+m\theta(\bar{J}X) {g}(Y,Z)+3 {g}(\Omega_Y \bar{J}X,Z) $,\\
	$B$ is integrable, therefore $\bar{g}([X,Y],Z)=0$. Also, $M$ is screen generic lightlike submanifold.\\
	Thus, ${g}(\Omega_X \bar{J}Y,fZ)+ {g}(\bar{h^{s}}(X,\bar{J}Y),\bar{J}Z)+3 {g}(\Omega_Y \bar{J}X,Z)- {g}(\Omega_Y \bar{J}X,fZ)-{g}(\bar{h^{s}}(Y,\bar{J}X),\bar{J}Z)-3{g}(\Omega_X \bar{J}Y,Z)=0$, which completes the proof.\\
\end{proof}

\begin{theorem}
	Let $M$ be a screen generic lightlike submanifold of a locally bronze semi-Riemannian manifold $\bar{M}$ with an $(l,m)$- type connection $\bar{\Omega}$. Then, the distribution $B_o$ is parallel  if and only if\\
	(i)\;${g}(\bar{h^{*}}(X,\bar{J}Y),\bar{J}N)=3{g}(\bar{h^{*}}(X,\bar{J}Y),N),$\\
	(ii)\;${g}(\Omega_X^{*} \bar{J}Y,fZ)+{g}(\bar{h^{s}}(X,\bar{J}Y),\bar{J}Z)=3{g}(\Omega_X^{*}\bar{J}Y,Z),$\\
	
	for all $X,Y\in\Gamma(B_o)$, $Z\in\Gamma(B^{'})$ and $N\in\Gamma(ltr(TM))$.
\end{theorem}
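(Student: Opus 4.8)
The plan is to unwind the statement ``$B_o$ is parallel'', i.e. $\Omega_X Y\in\Gamma(B_o)$ for all $X,Y\in\Gamma(B_o)$, into the vanishing of two scalar quantities. Since $S(TM)=B_o\oplus B^{'}$ and $TM=RadTM\perp S(TM)$, a tangent field lies in $B_o$ exactly when it has no component along $RadTM$ and none along $B^{'}$. The first is detected by pairing with an arbitrary $N\in\Gamma(ltr(TM))$, which is dual to $RadTM$ by Theorem~2.1, and the second by pairing with an arbitrary $Z\in\Gamma(B^{'})$, which is non-degenerate. Thus I will show that $B_o$ is parallel if and only if $\bar{g}(\Omega_X Y,N)=0$ and $\bar{g}(\Omega_X Y,Z)=0$ for all such $N$ and $Z$, and then identify these two equations with (i) and (ii).

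Throughout I would replace $Y$ by $\bar{J}(\bar{J}Y)-3\bar{J}Y$, which is legitimate because $\bar{J}^{2}=3\bar{J}+I$ by $(\ref{a})$ and $B_o$ is $\bar{J}$-invariant, so $\bar{J}Y,\bar{J}(\bar{J}Y)\in\Gamma(B_o)$. Hence $\bar{\Omega}_X Y=\bar{\Omega}_X\bar{J}(\bar{J}Y)-3\bar{\Omega}_X\bar{J}Y$. For the first summand I would push $\bar{J}$ outside using $(\ref{e5})$, writing $\bar{\Omega}_X\bar{J}(\bar{J}Y)=\bar{J}(\bar{\Omega}_X\bar{J}Y)+(\text{$l,m$-correction terms})$, and then transfer $\bar{J}$ onto the second slot via the compatibility $(\ref{b})$. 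Because $ltr(TM)$ is $\bar{J}$-invariant (as established earlier), $\bar{J}N\in\Gamma(ltr(TM))$, so this manipulation stays inside the lightlike transversal bundle.

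For (i), I first observe that $\bar{g}(\Omega_X Y,N)=\bar{g}(\bar{\Omega}_X Y,N)$, since $\bar{h}^{l}$ and $\bar{h}^{s}$ are orthogonal to $ltr(TM)$. After the substitution above, the Gauss formula $(\ref{e6})$ together with the screen decomposition $(\ref{e22})$ collapses $\bar{g}(\bar{\Omega}_X\bar{J}Y,N)$ to $g(\bar{h}^{*}(X,\bar{J}Y),N)$ and $\bar{g}(\bar{\Omega}_X\bar{J}Y,\bar{J}N)$ to $g(\bar{h}^{*}(X,\bar{J}Y),\bar{J}N)$, giving $\bar{g}(\Omega_X Y,N)=g(\bar{h}^{*}(X,\bar{J}Y),\bar{J}N)-3g(\bar{h}^{*}(X,\bar{J}Y),N)$; setting this to zero yields (i). For (ii) I pair with $Z\in\Gamma(B^{'})$, again using $\bar{g}(\Omega_X Y,Z)=\bar{g}(\bar{\Omega}_X Y,Z)$ by orthogonality of the transversal pieces to $S(TM)$. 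Decomposing $\bar{J}Z=fZ+wZ$ via $(\ref{g6})$ and applying $(\ref{e6})$ and $(\ref{e22})$ reduces $\bar{g}(\bar{\Omega}_X\bar{J}Y,\bar{J}Z)$ to $g(\Omega_X^{*}\bar{J}Y,fZ)+g(\bar{h}^{s}(X,\bar{J}Y),\bar{J}Z)$ and $\bar{g}(\bar{\Omega}_X\bar{J}Y,Z)$ to $g(\Omega_X^{*}\bar{J}Y,Z)$; equating the total to zero gives (ii).

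The main obstacle is bookkeeping the correction terms produced by the non-metric, torsioned connection $\bar{\Omega}$. The cleanest way to dispatch them is to note that every such term coming from $(\ref{e5})$ is a combination of $X$ and $\bar{J}X$; since $X,\bar{J}X\in\Gamma(B_o)$ and $B_o$ is orthogonal to both $ltr(TM)$ and $B^{'}$, pairing with $N$ or with $Z$ annihilates all of them, so no residual $l,m,\theta$ terms survive in (i) or (ii). Verifying that this orthogonality genuinely kills each correction, and that $\bar{h}^{*}$ lands in $RadTM$ and is therefore orthogonal to $B^{'}$, is the only delicate point; the remainder is a routine application of $(\ref{e6})$, $(\ref{e22})$, $(\ref{b})$ and $(\ref{g6})$. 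I expect the argument to run in close parallel to the integrability computation for $B_o$, the sole difference being that no antisymmetrization in $X,Y$ is performed, so each of (i) and (ii) retains a single term rather than a difference.
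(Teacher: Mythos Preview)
Your proposal is correct and follows essentially the same route as the paper: reduce parallelism of $B_o$ to the vanishing of $g(\Omega_X Y,N)$ and $g(\Omega_X Y,Z)$, use $\bar{J}^2=3\bar{J}+I$ to rewrite $\bar{\Omega}_X Y$ as $\bar{\Omega}_X\bar{J}(\bar{J}Y)-3\bar{\Omega}_X\bar{J}Y$, apply $(\ref{e5})$ and $(\ref{b})$, and then unpack via $(\ref{e6})$, $(\ref{e22})$, $(\ref{g6})$. Your handling of the $(l,m)$-correction terms---noting they are combinations of $X,\bar{J}X\in\Gamma(B_o)$ and hence orthogonal to both $ltr(TM)$ and $B'$---is exactly what the paper does implicitly, and your write-up is in fact cleaner on this point.
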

\begin{proof}
	Since the distribution $B_o$ is parallel. Therefore, for all  $X,Y\in\Gamma(B_o)$, $Z\in\Gamma(B^{'})$ and $N\in\Gamma(ltr(TM))$,
	\begin{equation}\label{pp1}
		{g}(\Omega_X Y,Z)=0,\; {g}(\Omega_X Y,N)=0,
	\end{equation}
	Using the concept of locally bronze semi-Riemannian manifold, we get\\
	${g}(\Omega_X Y,N)=\bar{g}(\bar{\Omega}_X \bar{J}(\bar{J}Y),N)-3\bar{g}(\bar{\Omega}_X \bar{J}Y,N)$,\\
	Employing equations (\ref{b}) and (\ref{e5}), we obtain\\
	${g}(\Omega_X Y,N)= {g}(\Omega_X \bar{J}Y,\bar{J}N)+ 3l \theta(\bar{J}Y){g}(X,N)+l\theta(Y){g}(X,N)-l\theta(\bar{J}Y){g}(\bar{J}X,N)+m\theta(Y){g}(\bar{J}X,N)-m\theta(\bar{J}Y){g}(X,N)-3{g}(\Omega_X \bar{J}Y,N)$,\\
	Equations (\ref{e22}) and (\ref{pp1}) leads to \\
	${g}(\Omega_X Y,N)={g}(\Omega_X^{*}\bar{J}Y,\bar{J}N)+{g}(\bar{h^{*}}(X,\bar{J}Y),\bar{J}N)-3{g}(\Omega_X^{*}\bar{J}Y,\bar{J}N)-3{g}(\bar{h^{*}}(X,\bar{J}Y),\bar{J}N)=0.$\\
	Similarly, from equations (\ref{a}), (\ref{b}), (\ref{g6}) and (\ref{e5}), we get\\
	${g}(\Omega_X Y,Z)= \bar{g}(\bar{\Omega}_X \bar{J}Y,fZ)+\bar{g}(\bar{\Omega}_X \bar{J}Y,wZ)-3\bar{g}(\bar{\Omega}_X \bar{J}Y,Z)+3l \theta(\bar{J}Y)\bar{g}(X,Z)+l\theta(Y)\bar{g}(X,Z)-l\theta(\bar{J}Y)\bar{g}(\bar{J}X,Z)+m\theta(Y)\bar{g}(\bar{J}X,Z)-m\theta(\bar{J}Y)\bar{g}(X,Z)$,\\
	From (\ref{e6}), (\ref{e22}) and (\ref{pp1}), we attain \\
	${g}(\Omega_X Y,Z)={g}(\Omega_X^{*} \bar{J}Y,fZ)+{g}(\bar{h^{s}}(X,\bar{J}Y),\bar{J}Z)-3{g}(\Omega_X^{*}\bar{J}Y,Z)=0.$
\end{proof}
\begin{theorem}
	For a locally bronze semi-Riemannian manifold $\bar{M}$ with an $(l,m)$-type connection $\bar{\Omega}$, the distribution $B^{'}$ of a screen generic lightlike submanifold  $M$ is parallel if and only if\\
	(i)\;${g}(\bar{h^{*}}(Y,fZ),\bar{J}N)+3{g}(\bar{A}_{wZ} Y,N)=3{g}(\bar{h^{*}}(Y,fZ),N)+{g}(\bar{A}_{wZ} Y,\bar{J}N),$\\
	(ii)\;${g}(\Omega_Y^{*}fZ,\bar{J}X)+3{g}(\bar{A}_{wZ} Y,X)={g}(\bar{A}_{wZ} Y,\bar{J}X)+3{g}(\Omega_Y^{*}fZ,X),$\\
	for any $Y,Z\in\Gamma(B^{'})$, $X\in\Gamma(B_o)$ and $N\in\Gamma(ltr(TM))$.
\end{theorem}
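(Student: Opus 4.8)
The plan is to translate the geometric statement ``$B'$ is parallel'' into two scalar conditions obtained by pairing $\Omega_Y Z$ against the bundles dual to the two non-$B'$ summands of $TM$. Since $TM = RadTM \perp B_o \perp B'$ with $B_o$ non-degenerate and $ltr(TM)$ dual to $RadTM$ (Theorem 2.1), a tangent field such as $\Omega_Y Z$ (which lies in $\Gamma(TM)$) belongs to $\Gamma(B')$ if and only if it is orthogonal to every $N \in \Gamma(ltr(TM))$, which annihilates its $RadTM$-component, and to every $X \in \Gamma(B_o)$, which annihilates its $B_o$-component. Thus I would first reduce the claim to proving that $B'$ is parallel if and only if $g(\Omega_Y Z, N) = 0$ and $g(\Omega_Y Z, X) = 0$ for all $Y,Z \in \Gamma(B')$, $N \in \Gamma(ltr(TM))$ and $X \in \Gamma(B_o)$; conditions (i) and (ii) are exactly these two equations rewritten.

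For the first equation I would exploit the bronze relation (\ref{a}) in the form $Z = \bar J(\bar J Z) - 3\bar J Z$, so that $g(\Omega_Y Z, N) = \bar g(\bar\Omega_Y \bar J(\bar J Z), N) - 3\bar g(\bar\Omega_Y \bar J Z, N)$. On the first summand I apply (\ref{e5}) to carry $\bar J$ across the connection and then use the compatibility (\ref{b}) to transfer it onto $N$, relying on the fact that $ltr(TM)$ is $\bar J$-invariant (Proposition 3.2), so that $\bar J N \in \Gamma(ltr(TM))$. Writing $\bar J Z = fZ + wZ$ by (\ref{g6}) and expanding $\bar\Omega_Y fZ$ and $\bar\Omega_Y wZ$ through the Gauss and Weingarten formulae (\ref{e6}) and (\ref{e8}), the transversal pieces $\bar h^{l}, \bar h^{s}, \bar\Omega^{s}, \bar D^{l}$ all pair to zero against $N$ and $\bar J N$ because $ltr(TM)$ is totally isotropic and orthogonal to $S(TM^{\perp})$; the decomposition (\ref{e22}) then isolates $\bar h^{*}(Y, fZ) \in \Gamma(RadTM)$ together with the operator $\bar A_{wZ}$. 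Collecting the two summands yields precisely the difference in (i).

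The analogous computation against $X \in \Gamma(B_o)$ gives (ii): here I pair the same expansion of $\bar\Omega_Y \bar J Z$ with $X$ and with $\bar J X$, where $\bar J X \in \Gamma(B_o)$ by the invariance (\ref{g2}). Now the roles reverse, since the screen transversal and lightlike transversal pieces drop out because $B_o \subseteq S(TM)$ is orthogonal to both $ltr(TM)$ and $S(TM^{\perp})$, while the $RadTM$-valued part $\bar h^{*}(Y, fZ)$ drops out because it is orthogonal to $B_o$, leaving only $\Omega^{*}_Y fZ$ and $\bar A_{wZ} Y$, which assemble into (ii).

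The step that needs the most care, and which I would write out explicitly, is checking that every torsion-induced correction term disappears. Applying (\ref{e5}) produces scalar multiples of $l$ and $m$ carrying factors $\bar g(Y, \cdot)$ and $\bar g(\bar J Y, \cdot)$ evaluated at $N$, $\bar J N$, $X$ or $\bar J X$; each of these vanishes because $Y \in \Gamma(B')$ and $\bar J Y = fY + wY$ are orthogonal to $ltr(TM)$ and to $B_o$ (using $B' \perp B_o$ and $\bar g(S(TM), ltr(TM)) = 0$). This is the same mechanism that cleared the analogous terms in the preceding parallelism theorem for $B_o$. Finally, because every manipulation above is an identity, the ``only if'' and ``if'' directions are obtained simultaneously: $g(\Omega_Y Z, N)$ and $g(\Omega_Y Z, X)$ literally equal the left-minus-right sides of (i) and (ii), so their vanishing is equivalent to (i) and (ii), which completes the proof.
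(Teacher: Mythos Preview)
Your proposal is correct and follows essentially the same route as the paper: reduce parallelism of $B'$ to the vanishing of $g(\Omega_Y Z,N)$ and $g(\Omega_Y Z,X)$, apply the bronze identity $Z=\bar J(\bar J Z)-3\bar J Z$ together with (\ref{e5}), (\ref{b}), (\ref{g6}), the Gauss--Weingarten formulae (\ref{e6}), (\ref{e8}) and the screen decomposition (\ref{e22}), and then discard the transversal and $\theta$-torsion terms by orthogonality. If anything, your justification for why the $l,m$-correction terms vanish is more explicit than the paper's, which simply drops them between lines.
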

\begin{proof}
	Let the distribution $B^{'}$ is parallel. Therefore, for all $Y,Z\in\Gamma(B^{'})$, $X\in\Gamma(B_o)$ and $N\in\Gamma(ltr(TM))$, \; $g(\Omega_Y Z,X)=0,\quad g(\Omega_Y Z,N)=0,$\\
	Now, using equations $(\ref{a})$ and $(\ref{e5})$, we obtain\\
	$g(\Omega_Y Z,N)=\bar{g}(\bar{J}(\bar{\Omega}_Y \bar{J}Z),N)+\bar{g}(l\{3\theta(\bar{J}Z)Y+\theta(Z)Y-\theta(\bar{J}Z)\bar{J}Y\},N)+\bar{g}(m\{\theta(Z)\bar{J}Y-\theta(\bar{J}Z)Y\},N)-3\bar{g}(\bar{\Omega}_Y \bar{J}Z,N)$,\\
	Employing (\ref{b}), (\ref{g6}), (\ref{e6}) and (\ref{e8}), we attain\\
	$g(\Omega_Y Z,N)={g}(\Omega_Y fZ,\bar{J}N)+{g}(\bar{h^{l}}(Y,fZ),\bar{J}N)+{g}(\bar{h^{s}}(Y,fZ),\bar{J}N)+{g}(-\bar{A}_{wZ}Y,\bar{J}N)+{g}(\bar{\Omega}_Y^{s}wZ,\bar{J}N)+{g}(\bar{D^{l}}(Y,wZ),\bar{J}N)-3{g}(\Omega_Y fZ,N)-3{g}(\bar{h^{l}}(Y,fZ),N)-3{g}(\bar{h^{s}}(Y,fZ),N)-3{g}(-\bar{A}_{wZ}Y,N)-3{g}(\bar{\Omega}_Y^{s}wZ,N)-3{g}(\bar{D^{l}}(Y,wZ),N),$\\
	Further, from equation(\ref{e22}), we get\\
	$g(\Omega_Y Z,N)={g}(\Omega_Y^{*}fZ,\bar{J}N)+{g}(\bar{h^{*}}(Y,fZ),\bar{J}N)+{g}(-\bar{A}_{wZ}Y,\bar{J}N)-3{g}(\Omega_Y^{*}fZ,N)-3{g}(\bar{h^{*}}(Y,fZ),N)-3{g}(-\bar{A}_{wZ}Y,N)=0. $\\
	Similarly, $g(\Omega_Y Z,X)={g}(\Omega_Y \bar{J}Z,\bar{J}X)+ 3l \theta(\bar{J}Z){g}(Y,X)+l\theta(Z){g}(Y,X)-l\theta(\bar{J}Z){g}(\bar{J}Y,X)+m\theta(Z){g}(\bar{J}Y,X)-m\theta(\bar{J}Z){g}(Y,X)-3{g}(\Omega_Y \bar{J}Z,X)$,\\
	Equations (\ref{e6}), (\ref{e8}) and (\ref{e22}) leads to\\
	$g(\Omega_Y Z,X)={g}(\Omega_Y^{*}fZ+\bar{h^{*}}(Y,fZ),\bar{J}X)+{g}(-\bar{A}_{wZ}Y,\bar{J}X)-3{g}(\Omega_Y^{*}fZ+\bar{h^{*}}(Y,fZ),X)-3{g}(-\bar{A}_{wZ}Y,X)=0 $.\\
\end{proof}

\begin{definition}\cite{SG2}
	$M$ is a screen $B$-geodesic screen generic lightlike submanifold if its second fundamental forms  satisfies 
	\begin{equation}\label{h1}
		h(X,Y)=0, \forall \quad X,Y\in\Gamma(B),
	\end{equation}
	\begin{remark}
		We say that $M$ is $B$-geodesic screen generic lightlike submanifold endowed with an $(l,m)$-type connection if its second fundamental forms satisfy the following conditions
		\begin{equation}\label{h2}
			\bar h^{l}(X,Y)=\bar h^{s}(X,Y)=0, \forall \quad X,Y\in\Gamma(B),
		\end{equation}
	\end{remark}	
\end{definition}
\begin{theorem}
	The distribution $B$ of a screen generic lightlike submanifold of a locally bronze semi-Riemannian manifold defines totally geodesic foliation in $\bar{M}$ with an $(l,m)$-type connection $\bar{\Omega}$ if and only if M is $B$-geodesic and $B$ is parallel with respect to to $\bar \Omega$ on $M$.
\end{theorem}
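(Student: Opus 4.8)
The plan is to recognize that the statement ``$B$ defines a totally geodesic foliation in $\bar M$'' means precisely that $\bar\Omega_X Y\in\Gamma(B)$ for all $X,Y\in\Gamma(B)$, and then to read off the claimed equivalence directly from the Gauss formula $(\ref{e6})$. First I would write $\bar\Omega_X Y=\Omega_X Y+\bar{h^{l}}(X,Y)+\bar{h^{s}}(X,Y)$, where the three summands lie in the mutually complementary bundles $\Gamma(TM)$, $\Gamma(ltr(TM))$ and $\Gamma(S(TM^{\perp}))$. Since $B=B_o\perp RadTM\subset TM$ whereas $ltr(TM)$ and $S(TM^{\perp})$ are transversal to $TM$, the requirement $\bar\Omega_X Y\in\Gamma(B)$ separates into two independent demands: that the two transversal parts vanish, and that the tangential part $\Omega_X Y$ land in the subdistribution $B$ and not merely in $TM$.

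For the forward implication I would assume the foliation is totally geodesic and compare components in $(\ref{e6})$. The vanishing of the $ltr(TM)$- and $S(TM^{\perp})$-parts is exactly the condition $(\ref{h2})$ that $M$ be $B$-geodesic with respect to the $(l,m)$-type connection. Decomposing the tangential part along $TM=B\oplus B^{'}$, the remaining requirement is that $\Omega_X Y$ have no $B^{'}$-component, that is $\Omega_X Y\in\Gamma(B)$, which says precisely that $B$ is parallel. For the converse, assuming simultaneously that $M$ is $B$-geodesic and that $B$ is parallel, the Gauss formula collapses to $\bar\Omega_X Y=\Omega_X Y\in\Gamma(B)$, so every leaf of $B$ is totally geodesic in $\bar M$.

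The one point that genuinely requires care --- and the step I expect to be the main obstacle --- is justifying that we are dealing with a bona fide \emph{foliation}, since the induced connection carries torsion. I would establish integrability of $B$ as a by-product: from $[X,Y]=\bar\Omega_X Y-\bar\Omega_Y X-\bar T(X,Y)$ together with the torsion formula $(\ref{e3})$, one sees that $\bar T(X,Y)=l\{\theta(Y)X-\theta(X)Y\}+m\{\theta(Y)\bar J X-\theta(X)\bar J Y\}$. Because $B$ is invariant under $\bar J$, the fields $\bar J X$ and $\bar J Y$ remain in $\Gamma(B)$ for $X,Y\in\Gamma(B)$, so $\bar T(X,Y)\in\Gamma(B)$; combined with $\bar\Omega_X Y,\bar\Omega_Y X\in\Gamma(B)$ this forces $[X,Y]\in\Gamma(B)$. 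Hence total geodesicity of $\bar\Omega$ on $B$ automatically entails integrability, and the stated characterization is exactly the conjunction of the $B$-geodesic condition and the parallelism of $B$, with no additional hypotheses needed.
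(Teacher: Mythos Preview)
Your proposal is correct and follows essentially the same route as the paper: both arguments read the equivalence directly off the Gauss formula $(\ref{e6})$, identifying the vanishing of $\bar h^{l}$ and $\bar h^{s}$ on $B$ with the $B$-geodesic condition $(\ref{h2})$ and the requirement $\Omega_X Y\in\Gamma(B)$ with parallelism of $B$. Your write-up is in fact slightly more complete, since you explicitly justify integrability of $B$ from the torsion formula $(\ref{e3})$ together with the $\bar J$-invariance of $B$, a point the paper's proof passes over in silence.
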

\begin{proof}
	Suppose that $B$ defines totally geodesic foliation in $\bar{M}$, then for all $X,Y\in\Gamma(B)$, \; $\bar{\Omega}_X Y\in\Gamma(B)$\; and \; $\bar{g}(\bar{\Omega}_X Y,\xi)=0,\quad \bar{g}(\bar{\Omega}_X Y,W)=0 \quad \bar{g}(\bar{\Omega}_X Y,Z)=0,$\;for all $\xi\in\Gamma(Rad(TM))$, $Z\in\Gamma(B^{'})$ and $W\in\Gamma(S(TM^{\perp})).$\\
	Using equations $(\ref{e6})$, $(\ref{e10})$ and $(\ref{e11})$, we get\\
	$\bar{g}(\bar{\Omega}_X Y , \xi)= {g}(\bar h^{l}(X,Y),\xi)= {g}( h^{l}(X,Y),\xi) + m\theta(Y){g}(w_l X,\xi)$,\\
	$\bar{g}(\bar{\Omega}_X Y,W)= {g}( \bar h^{s}(X,Y),W)= {g}( h^{s}(X,Y),W)+m\theta(Y){g}(w_s X,W)$,\\
	Since $M$ is screen generic lightlike submanifold of $\bar{M}$. Therefore, the distribution $B$ is invariant with respect to bronze structure $\bar{J}$. Hence, $w_l X$ and  $w_s X$ vanishes.\\
	Thus, $\bar h^{s}(X,Y)=\bar h^{l}(X,Y)=0$, which implies that $M$ is $B$-geodesic and $B$ is parallel with respect to $\Omega$ on $M$.\\
	Conversly, let $M$ be $B$-geodesic and parallel with respect to $\Omega$ on $M$. Therefore, $\bar h^{s}(X,Y)=\bar h^{l}(X,Y)=0$ $\forall$ $X,Y\in\Gamma(B)$, which implies that $\bar{\Omega}_X Y\in\Gamma(B)$ \\
\end{proof}
\begin{definition}\cite{SG2}
	$M$ is said to be mixed geodesic screen generic lightlike submanifold of a locally bronze semi-Riemannian manifold if its second fundamental form  satisfies 
	\begin{equation}\label{h4}
		h(X,Y)=0 ,\quad \forall \; X\in\Gamma(B),\; Y\in\Gamma(B^{'})
	\end{equation}
\end{definition}
\begin{remark}
	If its second fundamental forms satisfies the conditions
	\begin{equation}\label{h3}
		\bar h^{l}(X,Y)=\bar h^{s}(X,Y)=0, \forall \quad X\in\Gamma(B),\quad Y\in\Gamma(B^{'}),
	\end{equation}
	then $M$ is said to be mixed geodesic screen generic lightlike submanifold with an $(l,m)$-type connection.
\end{remark}
\begin{theorem}
	Let $M$ be a screen generic lightlike submanifold of a locally bronze semi-Riemannian manifold$(\bar{M},\bar{g},\bar{J})$ equipped with an $(l,m)$-type connection $\bar{\Omega}$. Then $M$ is mixed geodesic if and only if the following statements holds\\
	(i)\;${g}(\bar h^{l}(X,fZ),\xi)+l\theta(Z){g}(\bar{J}X,\xi)+3m\theta(Z){g}(\bar{J}X,\xi)+m\theta(Z){g}(X,\xi)=
	-{g}(\bar D^{l}(X,wZ),\xi)+l\theta(\bar{J}Z){g}(X,\xi)+m\theta(\bar{J}Z){g}(\bar{J}X,\xi),$\\
	(ii)\;$3{g}(\bar h^{s}(X,fZ)+\bar{\Omega}_X ^{s}wZ,W)+{g}(\bar{A}_{wZ} X, fW)={g}(\Omega_X fZ+\bar h^{s}(X,fZ),\bar{J}W)+{g}(\bar{\Omega}_X^{s} wZ, wW),$\\
	for all $X\in\Gamma(B)$, $Z\in\Gamma(B^{'})$ and $W\in\Gamma(S(TM^{\perp}))$.
\end{theorem}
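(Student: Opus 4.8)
The plan is to translate the geometric condition of mixed geodesy into two scalar identities by computing a single second fundamental form and pairing it once against the radical distribution and once against the screen transversal bundle. By the convention in $(\ref{h3})$, $M$ is mixed geodesic precisely when $\bar h^{l}(X,Z)=0$ and $\bar h^{s}(X,Z)=0$ for all $X\in\Gamma(B)$ and $Z\in\Gamma(B^{'})$. Since $\bar h^{l}(X,Z)\in\Gamma(ltr(TM))$ is detected by the pairing $\bar g(\cdot,\xi)$ with $\xi\in\Gamma(Rad\,TM)$ (as $\bar g(N_i,\xi_j)=\delta_{ij}$) and $\bar h^{s}(X,Z)\in\Gamma(S(TM^{\perp}))$ is detected by $\bar g(\cdot,W)$ with $W\in\Gamma(S(TM^{\perp}))$ non-degenerate, the statement is equivalent to $\bar g(\bar\Omega_X Z,\xi)=0$ and $\bar g(\bar\Omega_X Z,W)=0$. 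Here I would first record the clean reduction $\bar g(\bar\Omega_X Z,\xi)=g(\bar h^{l}(X,Z),\xi)$ and $\bar g(\bar\Omega_X Z,W)=g(\bar h^{s}(X,Z),W)$, which holds because the tangential part $\Omega_X Z\in\Gamma(TM)$ is orthogonal to $\xi\in\Gamma(TM^{\perp})$ and to $W\in\Gamma(S(TM^{\perp}))$, while $\bar h^{l}(X,Z)\in\Gamma(ltr(TM))$ is orthogonal to $S(TM^{\perp})$.

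The core of the argument is to compute $\bar\Omega_X\bar J Z$ in two different ways. On one hand, decomposing $\bar J Z=fZ+wZ$ by $(\ref{g6})$ and applying the Gauss formula $(\ref{e6})$ to $\bar\Omega_X(fZ)$ together with the Weingarten formula $(\ref{e8})$ to $\bar\Omega_X(wZ)$, I obtain $\bar\Omega_X\bar JZ=(\Omega_X fZ-\bar A_{wZ}X)+(\bar h^{l}(X,fZ)+\bar D^{l}(X,wZ))+(\bar h^{s}(X,fZ)+\bar\Omega^{s}_X wZ)$, split along $TM$, $ltr(TM)$ and $S(TM^{\perp})$. On the other hand, because $\bar M$ is locally bronze ($\bar\nabla\bar J=0$), the relation $(\ref{e5})$ expresses $\bar J(\bar\Omega_X Z)$ through $\bar\Omega_X\bar JZ$ up to the correction $l\{\theta(\bar JZ)X-\theta(Z)\bar JX\}+m\{\theta(\bar JZ)\bar JX-3\theta(Z)\bar JX-\theta(Z)X\}$; at this point I use the Proposition that $ltr(TM)$ is $\bar J$-invariant and that $Rad\,TM$ is $\bar J$-invariant, so that $\bar J\bar h^{l}(X,Z)$ remains in $ltr(TM)$, and I push $\bar J$ across the metric by means of the compatibility identities $(\ref{b})$ and $(\ref{c})$ together with the defining relation $\bar J^{2}=3\bar J+I$ of $(\ref{a})$.

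To land on $(i)$ I would pair the two expressions of $\bar\Omega_X\bar JZ$ against $\xi$, discarding the $S(TM^{\perp})$ contributions (orthogonal to $\xi$) and reading the $ltr(TM)$ parts through $\bar g(\bar J\bar h^{l}(X,Z),\bar J\xi)$; the identity $(\ref{c})$ converts this $\bar J$-$\bar J$ pairing into the combination carrying the coefficient $3$ seen in the statement, while the surviving $l$ and $m$ pieces assemble into exactly the $\theta$-corrections $l\theta(Z)g(\bar JX,\xi)$, $3m\theta(Z)g(\bar JX,\xi)$, $m\theta(Z)g(X,\xi)$ and their right-hand counterparts. To land on $(ii)$ I would instead pair against $W$, inserting $\bar g(\cdot,W)=\bar g(\bar J\,\cdot\,,\bar JW)-3\bar g(\cdot,\bar JW)$ from $(\ref{c})$ with $\bar JW=fW+wW$, so that the tangential operator $\bar A_{wZ}X$ meets $fW$, the screen pieces $\bar h^{s}(X,fZ)+\bar\Omega^{s}_X wZ$ meet $wW$, and the factor $3$ again enters through the compatibility identity. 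Matching the two computations term by term and imposing $\bar g(\bar\Omega_X Z,\xi)=0$ and $\bar g(\bar\Omega_X Z,W)=0$ then yields $(i)$ and $(ii)$; the argument is reversible because each step is an identity.

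The step I expect to be the main obstacle is the component bookkeeping under $\bar J$: unlike $Rad\,TM$ and $ltr(TM)$, the distribution $B^{'}$ is not $\bar J$-invariant (indeed $\bar J(B^{'})\not\subseteq S(TM)$ and $\bar J(B^{'})\not\subseteq S(TM^{\perp})$ by $(\ref{g3})$), so every occurrence of $\bar JZ$ and $\bar JW$ must be split into its $f$ and $w$ parts before the Gauss-Weingarten formulae apply, and one must track which summands of $\bar J\bar h^{s}(X,Z)$ fall back into $TM$ versus $tr(TM)$. Equally delicate is carrying the non-metric correction terms injected by the $(l,m)$-connection at every use of $(\ref{e5})$ and checking that they collect into precisely the $\theta$-terms displayed in $(i)$ and $(ii)$ without residue; the coefficients $3$ throughout are artifacts of the bronze identity $\bar J^{2}=3\bar J+I$ entering through $(\ref{c})$, and keeping their placement consistent across both pairings is where the calculation is most error-prone.
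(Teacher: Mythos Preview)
Your proposal is correct and follows essentially the same route as the paper: reduce mixed geodesy to the vanishing of $\bar g(\bar\Omega_X Z,\xi)$ and $\bar g(\bar\Omega_X Z,W)$ via the Gauss formula $(\ref{e6})$, then rewrite $\bar\Omega_X\bar J Z$ using $(\ref{e5})$ together with the splitting $\bar J Z=fZ+wZ$ and the Gauss--Weingarten formulas $(\ref{e6})$, $(\ref{e8})$, and finally pair against $\xi$ for $(i)$ and against $W$ for $(ii)$. The only cosmetic difference is that for $(ii)$ the paper invokes the bronze identity in the form $Z=\bar J(\bar J Z)-3\bar J Z$ on the argument before pairing, whereas you push $\bar J$ across the metric via $(\ref{c})$; these are equivalent manipulations. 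One small inaccuracy in your write-up: in $(i)$ the coefficient $3$ does not arise from $(\ref{c})$ but directly from the correction term $-3m\theta(Z)\bar J X$ in $(\ref{e5})$, and in fact $(\ref{c})$ is not needed for $(i)$ at all since the paper simply replaces $\xi$ by $\bar J\xi$ using the $\bar J$-invariance of $Rad(TM)$ and then applies $(\ref{b})$ once.
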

\begin{proof}
	Since $M$ is mixed geodesic, therefore\; ${g}(\bar h^{l}(X,Z),\xi)=0,\; {g}(\bar h^{s}(X,Z),W)=0$ \\
	for all $ X\in\Gamma(B)$, $Z\in\Gamma(B^{'})$, $\xi\in\Gamma(Rad(TM))$ and $W\in\Gamma(S(TM^{\perp}))$.\\
	Using Gauss formula, we obtain \\
	$\bar{g}(\bar{\Omega}_X Z, \xi)=0,\; \bar{g}(\bar{\Omega}_X Z, W)=0$\\
	From equations (\ref{g1}), (\ref{g6}) and $(\ref{e5})$, we have\\
	$\bar{g}(\bar{\Omega}_X Z, \xi)=\bar{g}(\bar{\Omega}_X Z, \bar{J}\xi)= \bar{g}(\bar{\Omega}_X fZ,\xi)+ \bar{g}(\bar{\Omega}_X wZ,\xi)+l\theta(Z)\bar{g}(\bar{J}X,\xi)+3m\theta(Z)\bar{g}(\bar{J}X,\xi)+m\theta(Z)\bar{g}(X,\xi)-l\theta(\bar{J}Z)\bar{g}(X,\xi)-m\theta(\bar{J}Z)\bar{g}(\bar{J}X,\xi)=0$,\\
	Similarly, using equations (\ref{a}), (\ref{b}), (\ref{g6}) and $(\ref{e5})$ , we get\\
	$\bar{g}(\bar{\Omega}_X Z, W)= \bar{g}(\bar{\Omega}_X \bar{J}(\bar{J}Z),W)-3\bar{g}(\bar{\Omega}_X \bar{J}{Z},W)=\bar{g}(\bar{\Omega}_X fZ,fW)+ \bar{g}(\bar{\Omega}_X fZ,wW)+\bar{g}(\bar{\Omega}_X wZ,fW)+\bar{g}(\bar{\Omega}_X wZ,wW)-3 \bar{g}(\bar{\Omega}_X fZ,W)- 3\bar{g}(\bar{\Omega}_X wZ,W)$,\\
	Employing equations  $(\ref{e6})$ and $(\ref{e8})$, we attain\\
	$\bar{g}(\bar{\Omega}_X Z, W)={g}({\Omega}_X fZ,fW)+ {g}(\bar h^{s}(X,fZ),wW)+ {g}(-\bar{A}_{wZ}X,fW)+ {g}(\bar{\Omega}_X^{s}wZ,wW)-3{g}(\bar h^{s}(X,fZ),W)-3{g}(\bar{\Omega}_X^{s}wZ,W) =0.$
\end{proof}
\begin{theorem}
	Let $M$ be a screen generic lightlike submanifold of a locally bronze semi-Riemannian manifold $\bar{M}$ equipped with an $(l,m)$-type connection $\bar{\Omega}$. Then, for any $X\in\Gamma(B_o)$, $Z\in\Gamma(B^{'})$, we have \\
	$f(\Omega_X fZ-\bar{A}_{wZ}X)+B(\bar h^{s}(X,fZ)+\bar{\Omega}_X^{s}wZ)+3\bar{A}_{wZ}X+3l\theta(\bar{J}Z)X+l\theta(Z)X+m\theta(Z)\bar{J}X=3 \Omega_X fZ +l\theta(\bar{J}Z)\bar{J}X+m\theta(\bar{J}Z)X+\Omega_X Z,$
\end{theorem}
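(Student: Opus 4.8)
The plan is to obtain the identity as the tangential projection of a second-order consequence of the transformation law \eqref{e5} for $\bar{\Omega}$ acting on $\bar{J}$, combined with the defining bronze relation \eqref{a}. First I would specialize \eqref{e5} to $Y=Z\in\Gamma(B^{'})$, which gives $\bar{\Omega}_X\bar{J}Z=\bar{J}(\bar{\Omega}_X Z)+P$, where $P$ abbreviates the $(l,m)$-correction $l\{\theta(\bar{J}Z)X-\theta(Z)\bar{J}X\}+m\{\theta(\bar{J}Z)\bar{J}X-3\theta(Z)\bar{J}X-\theta(Z)X\}$. Solving this for $\bar{J}(\bar{\Omega}_X Z)$ and applying $\bar{J}$ once more, the key move is to invoke $\bar{J}^{2}=3\bar{J}+I$ from \eqref{a} to replace $\bar{J}^{2}(\bar{\Omega}_X Z)$ by $3\bar{J}(\bar{\Omega}_X Z)+\bar{\Omega}_X Z$. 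Substituting $\bar{J}(\bar{\Omega}_X Z)=\bar{\Omega}_X\bar{J}Z-P$ back in then eliminates the awkward term $\bar{J}(\bar{\Omega}_X Z)$ entirely and yields the clean relation
\[
3\,\bar{\Omega}_X\bar{J}Z+\bar{\Omega}_X Z=\bar{J}(\bar{\Omega}_X\bar{J}Z)+3P-\bar{J}(P),
\]
which is an identity of vector fields in $T\bar{M}\mid_M$.

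Next I would expand $\bar{\Omega}_X\bar{J}Z$ explicitly. Writing $\bar{J}Z=fZ+wZ$ with $fZ\in\Gamma(B^{'})$ and $wZ\in\Gamma(S(TM^{\perp}))$ by \eqref{g6}, the Gauss formula \eqref{e6} handles $\bar{\Omega}_X fZ$ and the Weingarten formula \eqref{e8} handles $\bar{\Omega}_X wZ$, so that the tangential part of $\bar{\Omega}_X\bar{J}Z$ is $\Omega_X fZ-\bar{A}_{wZ}X$, while its $ltr(TM)$-part $\bar{h^{l}}(X,fZ)+\bar{D^{l}}(X,wZ)$ and its $S(TM^{\perp})$-part $\bar{h^{s}}(X,fZ)+\bar{\Omega}_X^{s}wZ$ are recorded separately. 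The heart of the computation is then the tangential projection of the displayed identity. On the left the tangential part is immediately $3(\Omega_X fZ-\bar{A}_{wZ}X)+\Omega_X Z$. On the right I would apply $\bar{J}$ to the decomposed $\bar{\Omega}_X\bar{J}Z$, using \eqref{s1} on the tangential piece and \eqref{s2} on the screen-transversal piece, and crucially invoke the Proposition that $ltr(TM)$ is $\bar{J}$-invariant, so that the lightlike transversal terms contribute nothing tangentially; this produces exactly $f(\Omega_X fZ-\bar{A}_{wZ}X)+B(\bar h^{s}(X,fZ)+\bar{\Omega}_X^{s}wZ)$.

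Finally I would compute the scalar correction $3P-\bar{J}(P)$, again using $\bar{J}^{2}=3\bar{J}+I$ to expand $\bar{J}(P)$; since $X\in\Gamma(B_o)$ with $B_o\subset B$ and $B$ invariant, both $X$ and $\bar{J}X$ are tangent to $M$, so every term of $3P-\bar{J}(P)$ is already tangential and survives the projection intact. Collecting the $\theta(\bar{J}Z)X$, $\theta(Z)X$, $\theta(Z)\bar{J}X$ and $\theta(\bar{J}Z)\bar{J}X$ contributions and transposing the terms $3\bar{A}_{wZ}X$, $m\theta(\bar{J}Z)X$ and $l\theta(\bar{J}Z)\bar{J}X$ to the appropriate sides reproduces the asserted equation. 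I expect the main obstacle to be purely bookkeeping: keeping straight which components land in $ltr(TM)$ versus $S(TM^{\perp})$ when $\bar{J}$ is applied (so that only the $B(\cdot)$ terms appear tangentially), and correctly tallying the many $l$- and $m$-weighted scalar terms produced by the two applications of $\bar{J}^{2}=3\bar{J}+I$. The conceptual content is entirely concentrated in the single relation displayed above.
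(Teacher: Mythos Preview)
Your proposal is correct and follows essentially the same route as the paper. The only cosmetic difference is that the paper starts from $Z=\bar{J}^{2}Z-3\bar{J}Z$ and applies \eqref{e5} with $Y=\bar{J}Z$ to obtain directly $\bar{\Omega}_X Z=\bar{J}(\bar{\Omega}_X\bar{J}Z)-3\bar{\Omega}_X\bar{J}Z+(\text{scalar terms})$, whereas you first apply \eqref{e5} with $Y=Z$ and then hit both sides with $\bar{J}$; the resulting identity, the subsequent Gauss--Weingarten expansion of $\bar{\Omega}_X\bar{J}Z=\bar{\Omega}_X fZ+\bar{\Omega}_X wZ$, and the tangential comparison are identical in substance.
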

\begin{proof}
	Using (\ref{a}), (\ref{g6}) and (\ref{e5}), we get\\
	$\bar{\Omega}_X Z =\bar{J}(\bar{\Omega}_X fZ)+\bar{J}(\bar{\Omega}_X wZ)+3l \theta(\bar{J}Z)X+l\theta(Z)X-l\theta(\bar{J}Z)\bar{J}X+m\theta(Z)\bar{J}X-m\theta(\bar{J}Z)X-3\bar{\Omega}_X fZ- 3 \bar{\Omega}_X wZ$,\\
	From (\ref{s1}), (\ref{s2}), (\ref{e6}) and (\ref{e8}), we obtain \\
	$\bar{\Omega}_X Z=f(\Omega_X fZ) + w(\Omega_X fZ) + B(\bar h^{l}(X,fZ))+C(\bar h^{l}(X,fZ))+B(\bar h^{s}(X,fZ))+C(\bar h^{s}(X,fZ))+f(-\bar{A}_{wZ}X)+ w(-\bar{A}_{wZ}X)+ B(\bar{\Omega}_X^{s}wZ)+ C(\bar{\Omega}_X^{s}wZ)+ B(\bar D^{l}(X,wZ))+C(\bar D^{l}(X,wZ))+3l \theta(\bar{J}Z)X+l\theta(Z)X-l\theta(\bar{J}Z)\bar{J}X+m\theta(Z)\bar{J}X-m\theta(\bar{J}Z)X-3{\Omega}_X fZ- 3 \bar h^{l}(X,fZ)- 3 \bar h^{s}(X,fZ)+ 3 \bar{A}_{wZ}X- 3\bar{\Omega}_X^{s}wZ-3\bar D^{l}(X,wZ) $,\\ 
	Comparing tangential parts of above equation, we have \\
	$\Omega_X Z= f(\Omega_X fZ-\bar{A}_{wZ}X)+B(\bar h^{s}(X,fZ)+\bar{\Omega}_X^{s}wZ)+3\bar{A}_{wZ}X+3l\theta(\bar{J}Z)X+l\theta(Z)X+m\theta(Z)\bar{J}X-3 \Omega_X fZ -l\theta(\bar{J}Z)\bar{J}X-m\theta(\bar{J}Z)X.$
\end{proof}

\begin{theorem}
	For a locally bronze semi-Riemannian manifold $(\bar{M},\bar{g},\bar{J})$ with an $(l,m)$-type connection $\bar{\Omega}$, a screen generic lightlike submanifold  $M$ is mixed geodesic if and only if the following conditions hold\\
	(i)\; $C(\bar h^{l}(X,fZ) + \bar D^{l}(X,wZ))=0,$\\
	(ii)\; $w(\Omega_X fZ-\bar{A}_{wZ}X)= - C(\bar h^{s}(X,fZ)+\bar{\Omega}_X ^{s} wZ),$\\
	(iii)\; $\bar h^{s}(X,fZ)+ \bar D^{l}(X,wZ)= - \bar h^{l}(X,fZ)-\bar{\Omega}_X ^{s} wZ,$\\
	for any $X\in\Gamma(B)$,\; $Z\in\Gamma(B^{'})$.
\end{theorem}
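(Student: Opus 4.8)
The plan is to reduce the statement to a comparison of transversal components in the expansion of $\bar{\Omega}_X Z$ that was established in the preceding theorem. First I would recall that, by the remark following the definition of mixed geodesicity, $M$ is mixed geodesic precisely when $\bar{h^{l}}(X,Z)=0$ and $\bar{h^{s}}(X,Z)=0$ for all $X\in\Gamma(B)$, $Z\in\Gamma(B^{'})$. By the Gauss formula $(\ref{e6})$ this is equivalent to the vanishing of the full transversal part (in $\Gamma(tr(TM))$) of $\bar{\Omega}_X Z$, since $\bar{h^{l}}(X,Z)\in\Gamma(ltr(TM))$ and $\bar{h^{s}}(X,Z)\in\Gamma(S(TM^{\perp}))$ lie in complementary subbundles, so their sum vanishes if and only if each does.

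Next I would reuse the computation carried out in the previous theorem, which is valid verbatim for $X\in\Gamma(B)$: writing $\bar{J}Z=fZ+wZ$ via $(\ref{g6})$, applying the bronze identity $\bar{J}^{2}=3\bar{J}+I$ of $(\ref{a})$ in the form $Z=\bar{J}(\bar{J}Z)-3\bar{J}Z$, and then expanding $\bar{\Omega}_X fZ$ by the Gauss formula $(\ref{e6})$ and $\bar{\Omega}_X wZ$ by the Weingarten formula $(\ref{e8})$. Decomposing each resulting vector through $(\ref{s1})$ and $(\ref{s2})$ expresses $\bar{\Omega}_X Z$ as a sum of tangential, $ltr(TM)$- and $S(TM^{\perp})$-valued terms, together with the $(l,m)$ correction terms produced by $(\ref{e5})$. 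The key simplification is that all those correction terms are tangential: since $B$ is invariant, both $X$ and $\bar{J}X$ lie in $\Gamma(B)\subset\Gamma(TM)$, so every surviving $l$- and $m$-term drops out of the transversal part.

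Then I would collect the remaining transversal terms, namely $w(\Omega_X fZ-\bar{A}_{wZ}X)$, $C(\bar{h^{l}}(X,fZ)+\bar{D^{l}}(X,wZ))$, $C(\bar{h^{s}}(X,fZ)+\bar{\Omega}_X^{s}wZ)$ and $-3(\bar{h^{l}}(X,fZ)+\bar{h^{s}}(X,fZ)+\bar{\Omega}_X^{s}wZ+\bar{D^{l}}(X,wZ))$, and set their sum equal to zero. Invoking the Proposition that $ltr(TM)$ is $\bar{J}$-invariant, the term $C(\bar{h^{l}}(X,fZ)+\bar{D^{l}}(X,wZ))$ is purely $ltr(TM)$-valued, and because $\bar{J}$ is invertible (indeed $\bar{J}^{-1}=\bar{J}-3I$ by $(\ref{a})$) its vanishing is equivalent to $(i)$. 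Separating the remaining $w$- and $C$-contributions of the $S(TM^{\perp})$-valued quantities yields $(ii)$, while the group carrying the coefficient $-3$ yields $(iii)$; conversely, substituting $(i)$–$(iii)$ back into the collected expression reproduces the vanishing of the entire transversal component, hence mixed geodesicity.

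I expect the main obstacle to be the bookkeeping in the third step: one must track carefully into which subbundle ($ltr(TM)$ or $S(TM^{\perp})$) each image under $\bar{J}$ falls, in particular distinguishing $C$ acting on $ltr(TM)$-valued quantities (which, by invariance, stays in $ltr(TM)$ with vanishing $B$-part) from $C$ acting on $S(TM^{\perp})$-valued quantities (whose tangential $B$-part need not vanish), and then justifying that the three groups of terms decouple so that each condition can be read off independently. This decoupling is precisely the delicate point of the equivalence, and I would devote the most care to verifying that the $ltr(TM)$- and $S(TM^{\perp})$-projections, together with the invertibility of $\bar{J}$, force each of $(i)$, $(ii)$ and $(iii)$ separately rather than merely their aggregate.
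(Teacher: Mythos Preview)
Your proposal follows essentially the same route as the paper: both arguments use the bronze identity to rewrite $Z=\bar J(\bar JZ)-3\bar JZ$, expand $\bar\Omega_X(fZ)$ and $\bar\Omega_X(wZ)$ via the Gauss and Weingarten formulae $(\ref{e6})$, $(\ref{e8})$, apply $(\ref{s1})$--$(\ref{s2})$, observe that for $X\in\Gamma(B)$ all the $l$-- and $m$--correction terms are tangential, and then read off the transversal component. The paper's write-up stops at the single transversal identity and leaves the splitting into $(i)$--$(iii)$ implicit; your explicit flagging of that decoupling as the delicate step, and your plan to justify it via the $ltr(TM)$/$S(TM^{\perp})$ projections together with the $\bar J$-invariance of $ltr(TM)$, is in fact more careful than what the paper records.
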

\begin{proof}
	Let $M$ is mixed geodesic screen generic lightlike submanifold, then\\
	From $(\ref{eq21})$, $(\ref{a})$, $(\ref{g6})$ and $(\ref{e1})$, we get\\
	$h(X,Z)=\bar{J}(\bar{\Omega}_X fZ)+ \bar{J}(\bar{\Omega}_X wZ)+3l \theta(\bar{J}Z)X+l\theta(Z)X-l\theta(\bar{J}Z)\bar{J}X+m\theta(Z)\bar{J}X-m\theta(\bar{J}Z)X-3\bar{\Omega}_X fZ-3 \bar{\Omega}_X wZ-\Omega_X Z ,$\\
	Employing $(\ref{s1})$, $(\ref{s2})$, $(\ref{e6})$ and $(\ref{e8})$ on above equation and compare the transversal parts, we obtain \\
	$h(X,Z)= w(\Omega_X fZ)+ C(\bar h^{l}(X,fZ))+C(\bar h^{s}(X,fZ))+ w(-\bar{A}_{wZ}X)+C(\bar{\Omega}_X ^{s}wZ)+C(\bar D^{l}(X,fZ))-3\bar h^{l}(X,fZ)-3 \bar h^{s}(X,fZ) - 3 \bar{\Omega}_X^{s}wZ-3\bar D^{l}(X,wZ)=0. $
\end{proof}

\section{Totally umbilical screen generic lightlike submanifolds}

\begin{definition}\cite{TU1}
	A lightlike submanifold $(M,g)$ of a semi-Riemannian manifold $(\bar{M},\bar{g})$ is totally umbilical in $\bar{M}$ if there is a smooth transversal vector field $H\in\Gamma(ltr(TM))$ on $M$, called the transversal curvature vector field of $M$, such that, for all $ X,Y\in\Gamma(TM)$,
	\begin{equation}\label{u1}
		h(X,Y)= Hg(X,Y),
	\end{equation} 
\end{definition}
Using Gauss and Weingarten equations of $\bar{M}$, we observe that $M$ is totally umbilical if and only if on each coordinate neighborhood $U$, there exist smooth vector fields $H^{l}\in\Gamma(ltr(TM))$ and $H^{s}\in\Gamma(S(TM^{\perp}))$, such that
\begin{equation}\label{u2}
	h^{l}(X,Y)= H^{l}g(X,Y),\;h^{s}(X,Y)= H^{s}g(X,Y),\; D^{l}(X,W)=0,
\end{equation}
for all $ X,Y\in\Gamma(TM), W\in\Gamma(S(TM^{\perp}))$.
\begin{lemma}\label{L1}
	let $M$ be a totally umbilical proper screen generic lightlike submanifold of a locally bronze semi-Riemannian manifold $\bar{M}$ with an $(l,m)$-type connection $\bar{\Omega}$. Then,\\
	(i)${\Omega}_X fY-\bar{A}_{w_l Y} X - \bar{A}_{w_s Y} X- l\theta(\bar{J}Y)X- m\theta(\bar{J}Y) fX =f \Omega_X Y + B \bar{h^{s}}(X,Y)- l \theta(Y)fX-3m\theta(Y)fX-m\theta(Y)X$,\\
	(ii)$ \bar h^{l}(X,fY) + \bar{\Omega}_X ^{l}w_l Y -m\theta(\bar{J}Y)w_l X= w_l \Omega_X Y + C \bar{h^{l}}(X,Y)-l\theta(Y)w_l X -3 m\theta(Y)w_l X$,\\
	(iii)$\bar{h^{s}}(X,fY)+ \bar{D^{s}}(X,w_l Y) + \bar{\Omega}_X^{s} w_s Y -m\theta(\bar{J}Y)w_s X =  w_s \Omega_X Y + C \bar{h^{s}}(X,Y)-l\theta(Y)w_s X -3 m\theta(Y)w_s X,$ for any \; $X,Y\in\Gamma(TM)$
	\begin{proof}
		Since $\bar{M}$ is a locally bronze semi-Riemannian manifold, it implies that $\bar{\nabla}_X \bar{J}Y= \bar{J}(\bar{\nabla}_X Y)$,\\
		Following equation $(\ref{e1})$, we obtain\\
		$\bar{\Omega}_X \bar{J}Y-l\theta(\bar{J}Y)X-m\theta(\bar{J}Y)\bar{J}X = \bar{J}(\bar{\Omega}_X Y)- l \theta(Y)\bar{J}X - m\theta(Y)\bar{J}^{2}X$,\\
		Using equations $(\ref{a})$, $(\ref{s1})$ and $(\ref{s2})$, we get\\
		$\bar{\Omega}_X fY + \bar{\Omega}_X w_l Y + \bar{\Omega}_X w_s Y - l \theta(\bar{J}Y)X - m\theta(\bar{J}Y)fX- m\theta(\bar{J}Y)w_l X- m\theta(\bar{J}Y)w_s X = \bar{J}(\bar{\Omega}_X Y)-l\theta(Y)fX-l\theta(Y)w_l X-l\theta(Y)w_s X- 3m\theta(Y)fX-3m\theta(Y)w_l X-3m\theta(Y)w_s X-m\theta(Y)X$,\\
		From equations $(\ref{e6})$, $(\ref{e7})$ and $(\ref{e8})$ and $(\ref{u2})$, we attain\\
		$\Omega_X fY + \bar{h^{l}}(X,fY) + \bar{h^{s}}(X,fY) - \bar{A}_{w_l Y} X + \bar{\Omega}_{X}^{l} {w_l Y} + \bar{D}^{s}(X, w_l Y) - \bar{A}_{w_s Y} X +\bar{\Omega}_{X}^{l} {w_s Y}-  l \theta(\bar{J}Y)X -m\theta(\bar{J}Y)fX- m\theta(\bar{J}Y)w_l X- m\theta(\bar{J}Y)w_s X = f \Omega_X Y +{w_l} \Omega_{X} Y+ {w_s} \Omega_{X} Y + C\bar{h^{l}}(X,Y)+ B\bar{h^{s}}(X,Y+ C\bar{h^{s}}(X,Y)-l\theta(Y)fX-l\theta(Y)w_l X-l\theta(Y)w_s X- 3m\theta(Y)fX-3m\theta(Y)w_l X- 3m\theta(Y)w_s X -m\theta(Y)X,$\\
		Thus the required result is obtained by comparing the tangential and normal parts of above equation.
	\end{proof}
\end{lemma}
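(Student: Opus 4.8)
The plan is to start from the defining property of a locally bronze semi-Riemannian manifold, namely $\bar{\nabla}\bar{J}=0$, which gives the parallelism identity $\bar{\nabla}_X \bar{J}Y = \bar{J}(\bar{\nabla}_X Y)$ for all $X,Y \in \Gamma(TM)$. First I would rewrite both sides in terms of the $(l,m)$-type connection $\bar{\Omega}$ by means of its definition $(\ref{e1})$, replacing each $\bar{\nabla}_X(\cdot)$ by $\bar{\Omega}_X(\cdot)-\theta(\cdot)\{lX+m\bar{J}X\}$. On the left this produces a correction term $\theta(\bar{J}Y)\{lX+m\bar{J}X\}$, while on the right, applying $\bar{J}$ to the correction term for $Y$ and invoking the bronze relation $(\ref{a})$, $\bar{J}^2 = 3\bar{J}+I$, yields $\theta(Y)\{l\bar{J}X + m(3\bar{J}X + X)\}$. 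The outcome is a single master identity relating $\bar{\Omega}_X \bar{J}Y$ to $\bar{J}(\bar{\Omega}_X Y)$ together with the $\theta$-correction terms, where the $3m\theta(Y)$ and $m\theta(Y)$ contributions are exactly those generated by $\bar{J}^2$.

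Next I would decompose every occurrence of $\bar{J}$ applied to a tangent vector via the splitting $(\ref{s1})$, writing $\bar{J}Y = fY + w_l Y + w_s Y$ and $\bar{J}X = fX + w_l X + w_s X$. The term $\bar{\Omega}_X \bar{J}Y$ then breaks into $\bar{\Omega}_X fY + \bar{\Omega}_X w_l Y + \bar{\Omega}_X w_s Y$, and I would evaluate each summand by the appropriate structure equation: the Gauss formula $(\ref{e6})$ for the tangential part $fY\in\Gamma(TM)$, the Weingarten formula $(\ref{e7})$ for $w_l Y\in\Gamma(ltr(TM))$ (producing the shape operator $\bar{A}_{w_l Y}$), and $(\ref{e8})$ for $w_s Y\in\Gamma(S(TM^{\perp}))$ (producing $\bar{A}_{w_s Y}$). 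On the other side, $\bar{J}(\bar{\Omega}_X Y)$ is handled by first expanding $\bar{\Omega}_X Y = \Omega_X Y + \bar{h^{l}}(X,Y) + \bar{h^{s}}(X,Y)$ through $(\ref{e6})$ and then applying $\bar{J}$ component-by-component: $(\ref{s1})$ to the tangential piece $\Omega_X Y$, and the transversal splitting $(\ref{s2})$, $\bar{J}V = BV + CV$, to the two second-fundamental-form pieces.

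At this stage the totally umbilical hypothesis enters through $(\ref{u2})$, which substitutes for the second fundamental forms and annihilates the $\bar{D^{l}}$-type terms, simplifying the identity. The resulting equation lives in the orthogonal decomposition $T\bar{M}\mid_M = (RadTM \oplus ltr(TM)) \perp S(TM) \perp S(TM^{\perp})$, and the concluding step is to project onto its natural summands. Comparing the $\Gamma(TM)$-components yields statement (i) (the terms $\Omega_X fY$, $\bar{A}_{w_l Y}X$, $\bar{A}_{w_s Y}X$, $f\Omega_X Y$ and $B\bar{h^{s}}(X,Y)$ are exactly the tangential pieces), the $\Gamma(ltr(TM))$-components give (ii) (here $C\bar{h^{l}}(X,Y)$ stays in $ltr(TM)$ since $ltr(TM)$ is $\bar{J}$-invariant by the earlier Proposition), and the $\Gamma(S(TM^{\perp}))$-components give (iii).

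The main obstacle I anticipate is bookkeeping rather than any conceptual difficulty: one must keep straight which of $f$, $w_l$, $w_s$, $B$, $C$ is the correct projection for each term, and must track the several $l\theta(\cdot)$ and $m\theta(\cdot)$ correction terms that distinguish the $(l,m)$-connection result from the Levi-Civita one. The likeliest source of error is confusing the Gauss and Weingarten formulae, since $w_l Y$ and $w_s Y$ are transversal and therefore require $(\ref{e7})$–$(\ref{e8})$ (with shape operators) rather than $(\ref{e6})$; getting that dichotomy right is what makes the tangential/lightlike/screen split come out cleanly.
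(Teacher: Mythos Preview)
Your proposal is correct and follows essentially the same route as the paper: start from $\bar{\nabla}_X\bar{J}Y=\bar{J}(\bar{\nabla}_X Y)$, convert to $\bar{\Omega}$ via $(\ref{e1})$ and the bronze relation $(\ref{a})$, split $\bar{J}Y$ by $(\ref{s1})$ and apply the Gauss--Weingarten formulae $(\ref{e6})$--$(\ref{e8})$ together with $(\ref{s2})$ on the right, invoke $(\ref{u2})$ to drop the $D^{l}$-type term, and then compare tangential, $ltr(TM)$- and $S(TM^{\perp})$-components. Your remark that the difficulty is purely bookkeeping is apt; no additional idea is needed.
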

\begin{theorem}
	If $M$ is a totally umbilical proper screen generic lightlike submanifold  of a locally bronze semi-Riemannian manifold $\bar{M}$ with an $(l,m)$-type connection $\bar{\Omega}$, then $H^{s}\notin\Gamma(\mu)$.
\end{theorem}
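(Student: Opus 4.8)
The plan is to argue by contradiction: suppose $H^{s}\in\Gamma(\mu)$ and extract the structural consequences. Since $\mu$ is invariant under $\bar{J}$, we get $\bar{J}H^{s}\in\Gamma(\mu)$, so in the decomposition (\ref{s2}) its tangential part vanishes, $BH^{s}=0$, while $CH^{s}=\bar{J}H^{s}\in\Gamma(\mu)$. I would then record two orthogonality facts relative to the splitting $S(TM^{\perp})=\omega D^{'}\oplus\mu$ of (\ref{ppp9}). First, $w(B^{'})\perp\mu$: for $Z\in\Gamma(B^{'})$ and $W\in\Gamma(\mu)$, equation (\ref{b}) gives $\bar{g}(wZ,W)=\bar{g}(\bar{J}Z,W)=\bar{g}(Z,\bar{J}W)=0$, using $\bar{g}(fZ,W)=0$ and that $\bar{J}W\in\Gamma(\mu)$ is transversal while $Z$ is tangent. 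Second, decomposing any tangent vector $V$ along $B_{o}\perp RadTM\perp B^{'}$ and noting that $\bar{J}$ is tangent-valued on the invariant factors, the screen-transversal part $w_{s}V$ is supplied only by the $B^{'}$-component and hence lies in $\Gamma(w(B^{'}))$, so $w_{s}V\perp\mu$.

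Next I would feed the totally umbilical hypothesis into Lemma \ref{L1}. Taking $X,Z\in\Gamma(B^{'})$, so that $w_{l}X=w_{l}Z=0$, $w_{s}X=wX$ and $w_{s}Z=wZ$, part (iii) of Lemma \ref{L1} becomes an identity whose only $\mu$-valued ingredients, after substituting the umbilical relations (\ref{u2}) together with the $(l,m)$-corrections (\ref{e11}) and (\ref{e16}), are $H^{s}g(X,fZ)$ (coming from $\bar{h^{s}}(X,fZ)$) and $g(X,Z)\,CH^{s}=g(X,Z)\,\bar{J}H^{s}$ (coming from $C\bar{h^{s}}(X,Z)$). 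Every remaining term lies in $\Gamma(w(B^{'}))$: the field $w_{s}\Omega_{X}Z$ by the second orthogonality fact, the various multiples of $wX$, and $m\theta(Z)C(wX)$ since $C(wX)=3wX-w(fX)\in\Gamma(w(B^{'}))$; the screen-transversal derivative $\bar{\Omega}^{s}_{X}wZ=\nabla^{s}_{X}wZ+m\theta(wZ)wX$ contributes its $\mu$-part only through $\nabla^{s}_{X}wZ$.

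I would then project the identity onto $\mu$ along $w(B^{'})\oplus\mu$. Using that $\mu$ is $\bar{\Omega}^{s}$-invariant, so that $\nabla^{s}_{X}wZ$ has no $\mu$-component, all the $w(B^{'})$ terms drop and one is left with $g(X,fZ)\,H^{s}=g(X,Z)\,\bar{J}H^{s}$ for all $X,Z\in\Gamma(B^{'})$. When $H^{s}$ and $\bar{J}H^{s}$ are linearly independent this forces $g(X,fZ)=g(X,Z)=0$ on $B^{'}$, which contradicts the non-degeneracy of $B^{'}$ (choose $X=Z$ with $g(Z,Z)\neq0$). Hence $H^{s}\notin\Gamma(\mu)$.

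The step I expect to be the genuine obstacle is controlling the screen-transversal derivative $\nabla^{s}_{X}wZ$: the clean conclusion requires $\mu$ (equivalently $w(B^{'})$) to be $\bar{\Omega}^{s}$-invariant, which I would establish as a separate lemma by differentiating $\bar{J}W\in\Gamma(\mu)$ for a local section $W$ of $\mu$, invoking $\bar{\nabla}\bar{J}=0$ from (\ref{b1}) and the Weingarten formula (\ref{e8}), and comparing $S(TM^{\perp})$-components. A secondary subtlety is the degenerate case $\bar{J}H^{s}=\lambda H^{s}$ with $\lambda^{2}=3\lambda+1$ (an eigen-direction of the bronze structure (\ref{a})); there the linear-independence contradiction is unavailable, and one must instead combine the tangential identity in Lemma \ref{L1}(i) with the defining non-invariance (\ref{g3}) and (\ref{g6}) of $B^{'}$ to exclude it.
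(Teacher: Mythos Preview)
Your approach has a genuine gap at precisely the point you flagged. The lemma you propose—that $\mu$ (equivalently $w(B')$) is $\nabla^{s}$-parallel—is false under the hypotheses, and once the correct $\mu$-contribution of $\nabla^{s}_{X}wZ$ is inserted, your projected identity collapses to a tautology. Indeed, writing $wZ=\bar{J}Z-fZ$ and using $\bar{\nabla}\bar{J}=0$ together with (\ref{1}) one finds that the $S(TM^{\perp})$-part of $\bar{\nabla}_{X}wZ$ is $w_{s}\nabla_{X}Z+Ch^{s}(X,Z)-h^{s}(X,fZ)$; under the umbilical relations (\ref{u2}) with $H^{s}\in\Gamma(\mu)$ this has $\mu$-component $g(X,Z)\bar{J}H^{s}-g(X,fZ)H^{s}$. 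Substituting this into your displayed identity and projecting onto $\mu$ gives $g(X,fZ)H^{s}+\bigl(g(X,Z)\bar{J}H^{s}-g(X,fZ)H^{s}\bigr)=g(X,Z)\bar{J}H^{s}$, which is $0=0$. Your suggested proof of the lemma, by differentiating $\bar{J}W$ and comparing $S(TM^{\perp})$-components, only yields the relation $(\mu\text{-part of }\nabla^{s}_{X}\bar{J}W)=\bar{J}\,(\mu\text{-part of }\nabla^{s}_{X}W)$ and does not force the $w(B')$-component of $\nabla^{s}_{X}W$ to vanish.

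The paper avoids this obstruction entirely by taking $X,Y\in\Gamma(B)$ rather than $\Gamma(B')$. Since $B$ is $\bar{J}$-invariant one has $wX=0$, so no $\nabla^{s}_{X}w(\,\cdot\,)$ term ever appears; comparing $S(TM^{\perp})$-components of $\bar{\nabla}_{X}\bar{J}Y=\bar{J}\bar{\nabla}_{X}Y$ gives $h^{s}(X,\bar{J}Y)=w_{s}\nabla_{X}Y+Ch^{s}(X,Y)$, and the $w_{s}\nabla_{X}Y$ term lies in $w(B')$, hence is orthogonal to $\mu$. With (\ref{u2}) this yields $g(X,\bar{J}Y)H^{s}=g(X,Y)CH^{s}$ on the $\mu$-side, which is the relation the paper then exploits. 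The moral is that the invariant distribution $B$, not $B'$, is the right place to test the identity: it kills the screen-transversal derivative for free.
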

\begin{proof}
	Let $M$ be a totally umbilical proper screen generic lightlike submanifold of a locally bronze semi-Riemannian manifold, then $\bar{\nabla}_X \bar{J}Y= \bar{J}(\bar{\nabla}_X Y)$. \\
	Now, using equations $(\ref{a})$, $(\ref{s1})$, $(\ref{s2})$, $(\ref{e1})$, $(\ref{e6})$ and comparing transversal parts, we obtain\\
	${h^{s}}(X,\bar{J}X)+ m\theta(\bar{J}X)w_s X = C{h^{s}}(X,X)+ Cm\theta(X)w_s X$, $\forall$ $X,Y\in\Gamma(B)$,\\
	From the concept of totally umbilical submanifold $M$ of $\bar{M}$, we get\\
	$g(X,\bar{J}X)H^{s}+ m\theta(\bar{J}X)w_s X = g(X,X)C H^{s}+ Cm\theta(X)w_s X$,\\
	Since $M$ is screen generic lightlike submanifold of $\bar{M}$. Therefore, the distribution $B$ is invariant with respect to $\bar{J}$. Also, if we take $X=Y$, then ,\quad $g(X,X)C H^{s}=0,\quad w_s X=0$.\\
	Hence, $H^{s}\notin\Gamma(\mu)$ and the proof is completed.
\end{proof}
\begin{theorem}
	Let $M$ be a totally umbilical proper screen generic lightlike submanifold of a locally bronze semi-Riemannian manifold $\bar{M}$ with an $(l,m)$-type connection $\bar{\Omega}$. Then the necessary and sufficient condition for induced connection to be metric is 
	\begin{equation*}
		l\theta(Y)g(X,Z) + m\theta(Y)g(fX,Z) = l\theta(Z)g(Y,X)+ m\theta(Z)g(Y,fZ),
	\end{equation*}
	$\forall \quad X,Y,Z\in\Gamma(B_o)$. 
\end{theorem}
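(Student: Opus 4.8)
The plan is to test the metric property through the non-metricity tensor and then to exploit the umbilicity to erase the lightlike second fundamental form. Recall that the induced connection $\Omega$ is a metric connection precisely when $(\Omega_X g)(Y,Z)=0$ for all tangent $X,Y,Z$, so I would start from the general expression for this tensor recorded in $(\ref{e18})$, namely
\[
(\Omega_X g)(Y,Z) = g(h^{l}(X,Y),Z) + g(Y,h^{l}(X,Z)) - l(\theta(Y)g(X,Z)+\theta(Z)g(Y,X)) - m(\theta(Y)g(fX,Z) + \theta(Z)g(Y,fZ)),
\]
and then impose the standing hypotheses by restricting the arguments $X,Y,Z$ to $\Gamma(B_o)$.

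The decisive step is to show that the two lightlike second fundamental form terms contribute nothing on $B_o$. Because $M$ is totally umbilical, $(\ref{u2})$ gives $h^{l}(X,Y)=H^{l}\,g(X,Y)$ for a transversal curvature field $H^{l}\in\Gamma(ltr(TM))$, whence $g(h^{l}(X,Y),Z)=g(X,Y)\,g(H^{l},Z)$ and likewise $g(Y,h^{l}(X,Z))=g(X,Z)\,g(H^{l},Y)$. Now I would invoke the orthogonal splitting $T\bar{M}|_M=(RadTM\oplus ltr(TM))\perp S(TM)\perp S(TM^{\perp})$: since $B_o\subseteq S(TM)$ and $S(TM)$ is orthogonal to $ltr(TM)$, both $g(H^{l},Z)$ and $g(H^{l},Y)$ vanish whenever $Y,Z\in\Gamma(B_o)$. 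Hence the first two terms drop out and for all $X,Y,Z\in\Gamma(B_o)$ the non-metricity reduces to
\[
(\Omega_X g)(Y,Z) = - l(\theta(Y)g(X,Z)+\theta(Z)g(Y,X)) - m(\theta(Y)g(fX,Z) + \theta(Z)g(Y,fZ)).
\]

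Finally I would read off the equivalence: $\Omega$ is a metric connection on $M$ if and only if the right-hand side of the last display vanishes identically for $X,Y,Z\in\Gamma(B_o)$, which upon rearrangement of the $\theta(Y)$- and $\theta(Z)$-contributions is exactly the stated relation. The only step requiring genuine care is the middle one, confirming that $H^{l}$ pairs trivially with $B_o$; this is precisely where the totally umbilical normalization $h^{l}=H^{l}g$ and the fact that $H^{l}$ lies in the lightlike transversal bundle (hence orthogonal to the screen subbundle $B_o$) combine. Once this orthogonality is secured, everything else is bookkeeping with $(\ref{e18})$.
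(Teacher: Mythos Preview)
Your argument is correct as far as it goes, but it takes a genuinely different route from the paper and bypasses the main geometric content of its proof. You eliminate the terms $g(h^{l}(X,Y),Z)$ and $g(Y,h^{l}(X,Z))$ by the orthogonality $ltr(TM)\perp S(TM)$; note that this works for any $Y,Z\in\Gamma(S(TM))$ regardless of umbilicity, so the totally umbilical hypothesis is never actually used in your argument. The paper instead invokes part (ii) of Lemma~\ref{L1} together with the bronze relation $\bar J^{2}=3\bar J+I$ to derive, for $X,Y\in\Gamma(B_o)$, the identity $g(X,\bar J Y)H^{l}=g(X,Y)\,C H^{l}$; choosing $X=\bar J Y$ then forces $H^{l}=0$, hence $h^{l}\equiv 0$ on all of $TM$. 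Only after this does the paper specialise $(\ref{e18})$. So the paper extracts the stronger intermediate fact $H^{l}=0$---which genuinely requires the umbilical hypothesis and the bronze structure---whereas your approach reaches the reduced non-metricity formula on $B_o$ by a shorter, purely orthogonality-based argument.

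One caution: in your final paragraph you assert that the vanishing of the reduced expression on $B_o$ is \emph{equivalent} to $\Omega$ being metric on $M$. Your computation only establishes the formula for $X,Y,Z\in\Gamma(B_o)$, so sufficiency (the displayed condition on $B_o$ implying $(\Omega_X g)(Y,Z)=0$ for all tangent $X,Y,Z$) is not justified by what you wrote. The paper's route, by killing $h^{l}$ globally, at least yields the reduced non-metricity formula on all of $TM$; your shortcut does not. If you want to match the theorem's ``necessary and sufficient'' claim, you should either restrict the meaning of ``metric'' to $B_o$ or supply the paper's argument that $H^{l}=0$.
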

\begin{proof}
	From lemma $(\ref{L1})$, we obtain\\
	$\bar h^{l}(X,fY) + \bar{\Omega}_X ^{l}w_l Y -m\theta(\bar{J}Y)w_l X= w_l \Omega_X Y + C \bar{h^{l}}(X,Y)-l\theta(Y)w_l X -3 m\theta(Y)w_l X,$\\
	Since $M$ is a totally umbillical proper screen generic lightlike submanifold, therefore\\
	$h^{l}(X,\bar{J}Y) + m\theta(\bar{J}Y)w_l X= C{h^{l}}(X,Y)+C m\theta(Y) w_l X$,\\
	 the distribution $B_o$ is invariant with respect to bronze structure $\bar{J}$. Thus, $w_l X=0$. Also, from equations $(\ref{u1})$ and $(\ref{u2})$, we attain\\
	$g(X,\bar{J}Y)H^{l} = g(X,Y)C H^{l}$,\\
	From this , $ 2g(X,\bar{J}Y)H^{l}=0$ is obtained. If we take $X=\bar{J}Y$, then we have $H^{l}=0$, that is equation $(\ref{u2})$ implies that  $h^{l}=0$,\\ 
	Employing equation $(\ref{e18})$, we have\\
	$(\Omega_X g)(Y,Z)=-l(\theta(Y)g(X,Z)+l\theta(Z)g(Y,X))-m(\theta(Y)g(fX,Z)+m\theta(Z)g(Y,fZ))=0,$\\
\end{proof}

\section{Minimal screen generic lightlike submanifolds}

A general notion of minimal lightlike submanifold $M$ of a semi-Riemannian manifold $\bar{M}$  introduced by Bejan-Duggal in \cite{E1} is as follows:
\begin{definition}
	A lightlike submanifold $(M,g,S(TM),S(TM^{\perp}))$, of a semi-Riemannian manifold $(\bar{M},\bar{g})$, is said to be minimal if :\\
	(i)\; $h^{s}=0$ on $Rad(TM)$ and\\
	(ii)\;trace\;$h=0$, where trace is written with respect to $g$ restricted to $S(TM)$.
\end{definition} 
It has been shown in \cite{E1} that the above definition is independent of $S(TM)$ and $S(TM^{\perp})$, but is dependent on $tr(TM)$.

\begin{xca}
	let $\bar{M}=\mathbb{R}_1^{11}$ be a semi-Riemannian space of signature (-,+,+,+,+,+,+,+,+,+,) with respect to the canonical basis $\{\partial y_1,\partial y_2,\partial y_3,\partial y_4,\partial y_5,\partial y_6,\partial y_7,\partial y_8,\partial y_9,\partial y_{10},\partial y_{11} \}$ and $M$ a submanifold of $(\mathbb{R}_1^{11},\bar{J})$ given by 
	\begin{equation*}
		y_1=0,\quad y_2=\frac{1}{2}(\sqrt{3}t_4+ t_2), 
	\end{equation*}
	\begin{equation*}
		y_3=\frac{1}{2}(-\sqrt{3}t_2+ t_4),\quad y_4=t_1,\quad y_5=t_4,
	\end{equation*}
	\begin{equation*}
		y_6 = sin t_5\; sinht_6,\quad y_7 =0,\quad y_8 = sint_5\; cosht_6,
	\end{equation*}
	\begin{equation*}
		y_9 = 0,\quad y_{10} = \sqrt{2}\;cost_5\; cosht_6,\quad y_{11} = 0.
	\end{equation*}
	
	Consider the bronze structure $\bar{J}$ defined by 
	$\bar{J}(y_1,y_2,y_3,y_4,y_5,y_6,y_7,y_8,y_9,y_{10},y_{11})=(\omega y_1,(3-\omega) y_2,(3-\omega) y_3,\omega y_4,(3-\omega) y_5,3 y_6+ y_7, y_6,3y_8 + y_9, y_8, 3y_{10}+y_{11},y_{10}).$\\
	
	Then, $TM$ is spanned by $\{B_1,B_2,B_3,B_4,B_5\}$, where
	\begin{equation*}
		B_1= \frac{1}{2}(-\sqrt{3}\partial y_3+\partial y _2),\quad B_2=\partial y_4,
	\end{equation*}
	\begin{equation*}
		B_3=\partial y_5+ \frac{1}{2}(\sqrt{3}\partial y_2+\partial y _3),
	\end{equation*}
	\begin{equation*}
		B_4= cost_5\; sinht_6\;\partial y_6 + cost_5 \;cosht_6 \;\partial y_8-\sqrt{2}\;sint_5\; cosht_6\;\partial y_{10},
	\end{equation*}
	\begin{equation*}
		B_5 = sint_5\; cosht_6\;\partial y_6 + sint_5 \;sinht_6\; \partial y_8 + \sqrt{2}\; cost_5 sinht_6\;\partial y_{10}.
	\end{equation*}
	where $Rad(TM)=Span\{B_3\}$ and $B_o=Span\{B_1,B_2\}$. It is proved that $\bar{J}B_3=(3-\omega)B_3$, which shows that $Rad(TM)$ is invariant under $\bar{J}$. Since, $\bar{J}B_1=(3-\omega)B_1$ , $\bar{J}B_2=\omega B_1$. Therefore, $B_{o}=Span\{B_1,B_2\}$, is invariant under $\bar{J}$.\\
	By direct calculations, we derive that lightlike transversal bundle $ltr(TM)$ is spanned by 
	\begin{equation*}
		N=\frac{1}{2}\{-\partial y_5+ \frac{1}{2}\partial y_3+\frac{\sqrt{3}}{2} \partial y_2\},
	\end{equation*}
	such that $\bar{J}N=(3-\omega)N$, which shows that $ltr(TM)$ is invariant under $\bar{J}$.\\
	Also, the screen transversal bundle $S(TM^{\perp})$ is spanned by
	\begin{equation*}
		W_1=\partial y_1,\quad W_2=cost_5\; sinht_6\;\partial y_7 + cost_5\; cosht_6\; \partial y_9-\sqrt{2}\;sint_5\; cosht_6\;\partial y_{11},
	\end{equation*} 
	\begin{equation*}
		W_3=sint_5\; cosht_6\;\partial y_7 + sint_5\; sinht_6\; \partial y_9 + \sqrt{2}\; cost_5\; sinht_6\;\partial y_{11},
	\end{equation*}
	\begin{equation*}
		W_4=-\sqrt{2}\;sinht_6\; cosht_6\;\partial y_7 + \sqrt{2}\;(sin^{2}t_5 + sinh^{2}t_6)\;\partial y_9 + sint_5\;cost_5\; \partial y_{11},
	\end{equation*}
	\begin{equation*}
		W_5=-\sqrt{2}\;sinht_6 \;cosht_6\;\partial y_6 + \sqrt{2}\;(sin^{2}t_5 + sinh^{2}t_6)\;\partial y_8 + sint_5\; cost_5\; \partial y_{10},
	\end{equation*}
	Since, $\bar{J}W_4= W_5$, $\bar{J}W_5= 3 W_5+ W_4$, it is easy to see that $\mu= Span\{W_4,W_5\}$ is invariant under $\bar{J}$ . Now, $B^{'}=Span\{B_4,B_5\}$ such that $\bar{J}B_4=3B_4+ W_2$ and $\bar{J}B_5=3B_5+ W_3$, which shows that $\bar{J}(B^{'})\not\subset S(TM^)$ and  $\bar{J}(B^{'})\not\subset S(TM^{\perp})$.\\
	Therefore, $M$ is a proper screen generic $1$-lightlike submanifold of $\mathbb{R}_1^{11}$.\\
	On the other hand, by direct calculation, we obtain
	\begin{equation*}
		\bar \nabla_{B_i} {B_j} = 0, \quad i= 1,2,3,\quad 1\leq j \leq 5,\quad and \quad h^{l}(B_4,B_4)=0,\quad h^{l}(B_5,B_5)=0,
	\end{equation*}
	\begin{equation*}
		h^{s}(B_4,B_4)=-\frac{\sqrt{2}\;sint_5\; cosht_6}{(sin^{2}t_5 + 2 sinh^{2}t_6)(1+sin^{2}t_5+2 sinh^{2}t_6)} W_5,
	\end{equation*}
	\begin{equation*}
		h^{s}(B_5,B_5)=\frac{\sqrt{2}\;sint_5 \;cosht_6}{(sin^{2}t_5 + 2 sinh^{2}t_6)(1+sin^{2}t_5+2 sinh^{2}t_6)} W_5,
	\end{equation*}
	Hence, for all $X\in\Gamma(TM)$, $h^{s}(X,B_3)=0$, \\
	That is $h^{s}=0$ on $Rad(TM)$ and $traceh|_{S(TM)} = h^{s}(B_4,B_4)+ h^{s}(B_5,B_5)=0$.\\
	Therefore, $M$ is minimal proper screen generic lightlike submanifold of $\mathbb{R}_1^{11}$.
\end{xca}

\begin{theorem}
	Let $M$ be a screen generic lightlike submanifold of a locally bronze semi-Riemannian manifold $(\bar{M},\bar{g},\bar{J})$ with an $(l,m)$-type connection $\bar{\Omega}$. Then the distribution $B_{o}$ is minimal if and only if\\
	$3\{g(fX,\bar{A}_W fX) + l\theta(W)g(X,f^{2}X) + m\theta(W)g(fX, f^{2}X)\} = -2 \{g(\bar{A}_W {fX},X)+ l\theta(W)g(X,fX) + m\theta(W)g(X,f^{2}X)\}$,\; for any $X\in\Gamma(B_{o})$ and $W\in\Gamma(S(TM^{\perp}))$.
\end{theorem}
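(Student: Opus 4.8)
The plan is to read the minimality of $B_{o}$ as the vanishing of the trace of the screen second fundamental form over $B_{o}$, and then to turn that trace into the shape operator $\bar{A}_{W}$ by means of (\ref{e20}). The two structural facts I would set up first are the ones that make everything collapse: since $B_{o}$ is invariant, every $X\in\Gamma(B_{o})$ satisfies $\bar{J}X=fX\in\Gamma(B_{o})$ with $w_{l}X=w_{s}X=0$, so (\ref{e11}) gives $\bar{h}^{s}(X,Y)=h^{s}(X,Y)$ on $B_{o}$ (in particular $\bar{h}^{s}$ is symmetric there); and the bronze relation $f^{2}=3f+I$ from (\ref{a}) together with the self-adjointness of $f$ from (\ref{b}) yields the two identities $g(fX,X)=g(X,fX)$ and $g(X,f^{2}X)=g(fX,fX)$, which I will use repeatedly.

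Next I would fix a $\bar{J}$-adapted quasi-orthonormal frame of $B_{o}$ assembled from pairs $\{X,fX\}$ (legitimate because $B_{o}$ is non-degenerate and $f$-invariant), and write minimality as $\bar{g}\!\left(\mathrm{trace}\,\bar{h}^{s}|_{B_{o}},W\right)=0$ for all $W\in\Gamma(S(TM^{\perp}))$. Using (\ref{e20}) restricted to $B_{o}$, where the $w_{s}X$- and $w_{l}X$-terms drop out, each $\bar{g}(\bar{h}^{s}(\cdot,\cdot),W)$ is replaced by the corresponding $\bar{g}(\bar{A}_{W}\cdot,\cdot)$ together with the correction terms $l\theta(W)g(\cdot,\cdot)$ and $m\theta(W)g(f\cdot,\cdot)$. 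The key algebraic point is that these corrections cancel in pairs: the $l\theta(W)$-parts cancel because $g(fX,X)=g(X,fX)$ and $g(X,f^{2}X)=g(fX,fX)$, and the $m\theta(W)$-parts cancel by symmetry of $f$, so only the $\bar{A}_{W}$-terms and the $\bar{D}^{l}$ cross-terms survive.

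The coefficients $3$ and $2$ are then exactly those produced by the bronze relation through $3f+2I=f^{2}+I$. Concretely, the surviving combination organizes itself as $\bar{g}\!\left(\bar{h}^{s}(fX,(f^{2}+I)X),W\right)+\bar{g}\!\left((f^{2}+I)X,\bar{D}^{l}(fX,W)\right)$, since $(f^{2}+I)X=3fX+2X$ distributes $\bar{h}^{s}(fX,\cdot)$ into the weights $3$ and $2$; applying (\ref{e20}) once more to this single pairing, and undoing the cancellations above, reproduces exactly the asserted identity with the factors $3$ and $2$ in place. Because every rewriting step is reversible, both implications are obtained at once.

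The hard part will be bookkeeping rather than conceptual. I must track the $\bar{D}^{l}(fX,W)$ cross-terms honestly and confirm that the normalization coming from the (indefinite) Gram matrix of $\{X,fX\}$—whose determinant is governed by the bronze relation via $g(fX,fX)=3g(X,fX)+g(X,X)$—does not smuggle in a spurious $\bar{h}^{s}(X,X)$ contribution to the trace. Pinning down precisely the $\bar{J}$-adapted frame so that the trace is frame-independent, and checking that the $\bar{h}^{l}$-valued part of the trace is automatically controlled by the invariance of $B_{o}$, is the step on which the whole equivalence hinges; once that is secured, the identity in the statement is just the trace-vanishing condition read through (\ref{e20}).
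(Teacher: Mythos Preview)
Your route differs from the paper's and, as written, has a real gap. The paper does not start from a $g$-trace of $\bar h^{s}$ over an orthonormal frame; it takes as its starting point the characterization (\ref{mm1}), which via (\ref{e9}) reads $\nabla_{X}X+\nabla_{\bar JX}\bar JX\in\Gamma(B_{o})$ for every $X\in\Gamma(B_{o})$, and then pairs this against $\bar JW$ rather than $W$. Using $\bar\nabla\bar J=0$ together with the Levi--Civita Weingarten formula (\ref{3}) turns $g(\nabla_{X}X,\bar JW)$ into $g(\bar JX,A_{W}X)$; (\ref{e15}) then replaces $A_{W}$ by $\bar A_{W}$, and the bronze relation enters through $\bar J^{2}X=3\bar JX+X$ in the second summand, which after the symmetry of the shape operator on $S(TM)$ produces exactly the coefficients $3$ and $2$.

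Two of your claims are off. First, the $\bar D^{l}$ terms need no tracking: $\bar g(Y,\bar D^{l}(X,W))=0$ for $Y\in B_{o}$ because $\bar D^{l}(X,W)\in\Gamma(ltr(TM))$ and $S(TM)\perp ltr(TM)$. Second, and more seriously, the $l\theta(W)$ and $m\theta(W)$ corrections do \emph{not} cancel in pairs; they are precisely the non-$\bar A_{W}$ terms that remain in the asserted identity. What (\ref{e20}) restricted to $B_{o}$ actually gives is that $3\,\bar g(\bar h^{s}(fX,fX),W)+2\,\bar g(\bar h^{s}(fX,X),W)$ equals the difference of the two sides of the theorem (after $g(fX,fX)=g(X,f^{2}X)$). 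So the question your plan leaves open is why minimality of $B_{o}$ should be equivalent, for every $X$, to the vanishing of that particular combination rather than of the orthonormal trace; your Gram-matrix normalization of $\{X,fX\}$ would yield the combination $(3b+a)\bar h^{s}(X,X)-2b\,\bar h^{s}(X,fX)+a\,\bar h^{s}(fX,fX)$ (with $a=g(X,X)$, $b=g(X,fX)$), which is not the one that produces the stated identity. The paper sidesteps this entirely: adopting (\ref{mm1}) and pairing with $\bar JW$ forces exactly the combination $3\bar h^{s}(fX,fX)+2\bar h^{s}(fX,X)$ to appear.
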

\begin{proof} 
	$B_{o}$ is minimal if and only if
	\begin{equation}\label{mm1}
		\Omega_X X + {\Omega}_{\bar{J}X} {\bar{J}X}- l\{\theta(X)X + \theta(\bar{J}X)\Bar{J}X\} -m \{\theta(X)fX + \theta(\bar{J}X)f^{2}X\} \in\Gamma(B_{o}),\; for all\; X\in\Gamma(B_{o})
	\end{equation}
	Since $M$ is a screen generic lightlike submanifold of $\bar{M}$. Therefore, the distribution $B_o$ is invariant w.r.t $\bar{J}$. Also using equations $(\ref{e9})$, $(\ref{1})$, $(\ref{b1})$, $(\ref{b})$, $(\ref{3})$ and $(\ref{e15})$, we get \\
	$g({\Omega}_X X, \bar{J}W)-l\theta(X)g(X,\bar{J}W)-m\theta(X)g(fX,\bar{J}W)= g(\bar{J}X,\bar{A}_W X) + l\theta(W)g(X,\bar{J}X) + m\theta(W)g(\bar{J}X,fX) $,\\
	and \quad $g({\Omega}_{\bar{J}X} \bar{J}X, \bar{J}W)-l\theta(\bar{J}X)g(\bar{J}X,\bar{J}W)-m\theta(\bar{J}X)g(f^{2}X,\bar{J}W)= g(3 \bar{J}X + X,\bar{A}_W \bar{J}X) + l\theta(W)g(3\bar{J}X + X,\bar{J}X) + m\theta(W)g(3\bar{J}X + X,f^{2}X)$,\\
	for any $X\in\Gamma(B_{o})$ and $W\in\Gamma(S(TM^{\perp}))$.\\
	On the other hand , since the shape operator is symmetric on $S(TM)$, we obtain\\
	$g(\Omega_X X + {\Omega}_{\bar{J}X} {\bar{J}X}- l\{\theta(X)X + \theta(\bar{J}X)\Bar{J}X\} -m \{\theta(X)fX + \theta(\bar{J}X)f^{2}X\},\bar{J}W) = g(\bar{J}X,\bar{A}_W X + l\theta(W)X + m\theta(W) fX) +g(3\bar{J}X + X,\bar{A}_W \bar{J}X + l\theta(W)\bar{J}X + m\theta(W) f^{2}X)$,\quad for any $X\in\Gamma(B_{o})$ and $W\in\Gamma(S(TM^{\perp}))$ \\
	Employing $(\ref{mm1})$ on above equation, we attain\\
	$3\{g(fX,\bar{A}_W fX) + l\theta(W)g(X,f^{2}X) + m\theta(W)g(fX, f^{2}X)\} = -2 \{g(\bar{A}_W {fX},X)+ l\theta(W)g(X,fX) + m\theta(W)g(X,f^{2}X)\}$
	
\end{proof}

\begin{theorem}
	Let $M$ be a screen generic lightlike submanifold of a locally bronze semi-Riemannian manifold $\bar{M}$ with an $(l,m)$-type connection $\bar{\Omega}$. Then $M$ is minimal if and only if
	\begin{enumerate}
		\item $trace \bar A^{*}_{\xi_k}|_{S(TM)} = trace \bar A_{W_j} |_{S(TM)} = 0,$
		\item $\bar{g}(\bar D^{l}(X,W),Y)=l\theta(W)\bar{g}(X,Y)+m\theta(W)\bar{g}(fX,Y),$
	\end{enumerate}
	$\forall  X,Y\in\Gamma(Rad(TM)), W\in\Gamma(S(TM^{\perp}))$, where $dim(TM)=m$, $dim(tr(TM))= n$, $dim(Rad(TM))=r$ , \{$\xi_k$\} is basis of $Rad(TM)$ and $W_j \in\Gamma(S(TM^{\perp}))$ for $k\in\{1,2,...,r\}$ and $j\in\{1,2,.......,n-r\}$.
\end{theorem}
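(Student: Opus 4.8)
The plan is to express the minimality conditions of the definition directly in terms of the $(l,m)$-type connection data, using the relationship between the screen/lightlike fundamental forms of $\bar\Omega$ and those of the Levi-Civita connection $\nabla$ recorded in equations $(\ref{e10})$--$(\ref{e11})$, together with the Gauss-type identities $(\ref{8})$--$(\ref{9})$ and their barred analogues. Minimality, per the definition of Bejan--Duggal, requires $\bar h^s=0$ on $Rad(TM)$ and $\mathrm{trace}\,\bar h|_{S(TM)}=0$. Since the trace is computed with respect to $g$ restricted to the nondegenerate screen distribution $S(TM)$, I would fix an orthonormal-type basis $\{\xi_k\}_{k=1}^{r}$ of $Rad(TM)$ and a basis $\{W_j\}$ of $S(TM^{\perp})$, and write the components of $\bar h=\bar h^l+\bar h^s$ against $\{N_i\}$ and $\{W_j\}$ respectively.

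First I would compute the $ltr(TM)$-component of $\mathrm{trace}\,\bar h$. Pairing $\bar h^l(X,X)$ with $\xi_k$ and summing over a screen basis, equation $(\ref{e28})$ (the barred version of $(\ref{8})$) converts $\bar g(\bar h^l(X,JX),\xi_k)$ into $g(\bar A^*_{\xi_k}X,JX)$ plus lower-order $\theta$-terms; summing over the basis produces $\mathrm{trace}\,\bar A^*_{\xi_k}|_{S(TM)}$ together with correction terms carrying factors $l\theta(\xi_k)$ and $m\theta(\xi_k)$. Next I would treat the $S(TM^{\perp})$-component: pairing $\bar h^s(X,X)$ with $W_j$ and invoking the barred Gauss--Weingarten identity $(\ref{e20})$ expresses $\bar g(\bar h^s(X,X),W_j)$ in terms of $\bar g(\bar A_{W_j}X,X)$, the term $\bar g(X,\bar D^l(X,W_j))$, and the $\theta$-corrections; summing gives $\mathrm{trace}\,\bar A_{W_j}|_{S(TM)}$ plus a contribution from $\bar D^l$.

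The condition $\bar h^s=0$ on $Rad(TM)$ is where the second stated equation $\bar g(\bar D^l(X,W),Y)=l\theta(W)\bar g(X,Y)+m\theta(W)\bar g(fX,Y)$ enters. I would evaluate $(\ref{e20})$ on $X,Y\in\Gamma(Rad(TM))$: the left side becomes $\bar g(\bar h^s(X,Y),W)+\bar g(Y,\bar D^l(X,W))$, while $\bar g(\bar A_W X,Y)$ on the right vanishes because $\bar A_W X\in\Gamma(TM)$ is paired against the radical direction and the relevant screen components drop out. Demanding $\bar h^s(X,Y)=0$ then forces exactly the displayed relation between $\bar D^l$ and the $\theta$-terms, and conversely that relation forces $\bar h^s=0$ on $Rad(TM)$.

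\textbf{The main obstacle} I anticipate is bookkeeping the $\theta$-correction terms so that they cancel cleanly and the final statement reduces to the two stated vanishing-trace conditions plus the $\bar D^l$-identity, with no residual $\theta$-dependence polluting condition (1). Concretely, one must verify that when the trace is formed over the full screen basis, the sum of the correction terms $\sum l\theta(\xi_k)g(JX,JX)+m\theta(\xi_k)g(JfX,JX)$ (and their $W_j$-counterparts) is absorbed into, or is consistent with, the clean statement $\mathrm{trace}\,\bar A^*_{\xi_k}|_{S(TM)}=\mathrm{trace}\,\bar A_{W_j}|_{S(TM)}=0$; this likely requires using $(\ref{e26})$ and $(\ref{e15})$ to relate $\bar A^*_\xi,\bar A_W$ back to $A^*_\xi,A_W$ and tracking that the $\theta$-terms either trace to zero or are exactly the ones isolated in condition (2). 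Once the correction terms are shown to be accounted for, the two directions of the ``if and only if'' follow by reading the computation forwards and backwards.
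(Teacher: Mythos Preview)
Your plan is essentially the paper's own argument: decompose $\mathrm{trace}\,\bar h|_{S(TM)}$ into its $ltr(TM)$- and $S(TM^{\perp})$-components, convert $\bar g(\bar h^{l}(e_i,e_i),\xi_k)$ and $\bar g(\bar h^{s}(e_i,e_i),W_j)$ into $g(\bar A^{*}_{\xi_k}e_i,e_i)$ and $g(\bar A_{W_j}e_i,e_i)$, and then handle $\bar h^{s}|_{Rad(TM)}=0$ by evaluating $(\ref{e20})$ on $X,Y\in\Gamma(Rad(TM))$.

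The only real difference is that the paper does not engage with the $\theta$-bookkeeping you flag as the main obstacle. It simply asserts the identities $\bar g(\bar h^{l}(e_i,e_i),\xi_k)=g(\bar A^{*}_{\xi_k}e_i,e_i)$ and $\bar g(\bar h^{s}(e_i,e_i),W_j)=g(\bar A_{W_j}e_i,e_i)$ and moves on; the extra $\theta$-terms visible in $(\ref{e28})$ and $(\ref{e20})$ are not explicitly tracked or cancelled. Likewise, for condition (2) the paper just reads off $(\ref{e20})$ on the radical, using $\bar g(\bar A_W X,Y)=0$ (your observation) and $w_l X=w_s X=0$ (since $Rad(TM)$ is $\bar J$-invariant), to obtain $\bar g(\bar h^{s}(X,Y),W)=-\bar g(\bar D^{l}(X,W),Y)+l\theta(W)\bar g(X,Y)+m\theta(W)\bar g(fX,Y)$. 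So your proposal is correct and in fact more scrupulous than the paper's proof; you will likely find that the correction terms you worry about either vanish because the arguments lie in $Rad(TM)$ or $B$, or are left implicit in the paper exactly as written.
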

\begin{proof}
	$M$ being minimal submanifold implies that \\
	$trace \bar h |_{S(TM)} = trace \bar h |_{B_o} + trace \bar h |_{B^{'}} = \Sigma_{i=1}^{a} \bar h(B_i, B_i) + \Sigma_{j=1}^{b} \bar h(E_j, E_j) = 0$,\\
	and  $\bar h^{s} |_{Rad(TM)} =0$.\\
	If we choose an orthonormal basis of $S(TM)$ as $\{e_i\}_{i=1}^{m-r}$, then we get\\
	$trace \bar h |_{S(TM)}=\Sigma_{i=1}^{a}\epsilon _i (\bar h^{l}(e_i,e_i)+\bar h^{s}(e_i,e_i)) + \Sigma_{j=1}^{b}\epsilon _j (\bar h^{l}(e_j,e_j)+\bar h^{s}(e_j,e_j))$,\\
	\quad $= \Sigma_{i=1}^{a}\epsilon _i [\frac{1}{r}\Sigma_{k=1}^{r}\bar{g}(\bar h^{l}(e_i,e_i),\xi_k)N_k + \frac{1}{n-r}\Sigma_{j=1}^{n-r}\bar{g}(\bar h^{s}(e_i,e_i),W_j)W_j]+\\$
	\quad $ \Sigma_{j=1}^{b}\epsilon _j [\frac{1}{r}\Sigma_{k=1}^{r}\bar{g}(\bar h^{l}(e_j,e_j),\xi_k)N_k + \frac{1}{n-r}\Sigma_{j=1}^{n-r}\bar{g}(\bar h^{s}(e_j,e_j),W_j)W_j]$,\\
	where $\{W_1,W_2,.....,W_{n-r}\}$is an orthonormal basis of $S(TM^{\perp})$.\\
	Since, $\bar{g}(\bar h^{l}(e_i,e_i),\xi_k)N_k = g (\bar A^{*}_{\xi_{k}}e_i,e_i)N_k $ and
	$\bar{g}(\bar h^{s}(e_i,e_i),W_j)W_j = g(\bar A_{W_j} e_i,e_i)W_j$, we derive\\
	$trace \bar h |_{S(TM)}=trace \bar A^{*}_{\xi_k}|_{B_o\oplus B^{'}} + trace \bar A_{W_j} |_{B_o\oplus B^{'}}$,\\
	Therefore, $trace\bar A^{*}_{\xi_k}|_{B_o\oplus B^{'}} $ and $trace \bar A_{W_j} |_{B_o\oplus B^{'}}=0$. \\
	On the other hand, using concept of minimal screen generic lightlike submanifold with equation $(\ref{e20})$, we obtain\\
	$\bar{g}(\bar h^{s}(X,Y),W)=-\bar{g}(\bar D^{l}(X,W),Y)+l\theta(W)\bar{g}(X,Y)+m\theta(W)\bar{g}(fX,Y)$,\\
	$\forall\quad X,Y\in\Gamma(Rad(TM)), W\in\Gamma(S(TM^{\perp}))$, which proof is completed.
\end{proof}

\bibliographystyle{amsplain}

\end{document}